\newif\ifdraft
\tikzset{every picture/.append style={remember picture},
na/.style={baseline=-.5ex}}
\theoremstyle{plain}
\newtheorem{theorem}{Theorem}[section]
\newtheorem{corollary}[theorem]{Corollary}
\theoremstyle{definition}
\newtheorem{definition}{Definition}[section]
\theoremstyle{remark}
\newtheorem{remark}{Remark}[section]
\newcommand{\nada}[1]   {}
\newcommand{\graph}{{\rm G}}
\newcommand{\pair} {(\mathbb S^3,V)}
\newcommand{\pairprime} {(\mathbb S^3,V')}
\newcommand{\Sbb} {\mathbb S}
\newcommand{\sphere} {\Sbb^3}
\newcommand{\Z} {\mathbb Z}
\definecolor{mygray}{rgb}{0.92,0.92,0.92}
\newcommand{\Compl}[1]{E(#1)}%W
\newcommand{\rnbhd}[1]{\mathcal{N}(#1)}%W
\newcommand{\onesum}{\operatorname{-{}-}}
\newcommand{\twosum}{\operatorname{\#}}
\newcommand{\op}[1]{\operatorname{#1}}
\newcommand{\Kzero}{\operatorname{K0}_1}
\newcommand{\Kthree}{\operatorname{K3}_1}
\newcommand{\Kfour}{\operatorname{K4}_1}
\newcommand{\Kfiveone}{\operatorname{K5}_1}
\newcommand{\Kfivetwo}{\operatorname{K5}_2}
\newcommand{\Ksevenone}{\operatorname{K7}_1}
\newcommand{\Kseventwo}{\operatorname{K7}_2}
\newcommand{\Kseventhree}{\operatorname{K7}_3}
\newcommand{\Ksevenfour}{\operatorname{K7}_4}
\newcommand{\Ksevenfive}{\operatorname{K7}_5}
\newcommand{\Ksevensix}{\operatorname{K7}_6}
\newcommand{\Ksevenseven}{\operatorname{K7}_7}
\newcommand{\fiveone}{{5_1}}
\newcommand{\seventhirtynine}{7_{39}}
\newcommand{\sevenforty}{7_{40}}
\newcommand{\sevenfortyone}{7_{41}}
\newcommand{\sevenfortythree}{7_{43}}
\newcommand{\sevenfortyfour}{7_{44}}
\newcommand{\sevenfiftytwo}{7_{52}}
\newcommand{\sevenfiftynine}{7_{59}}
\newcommand{\sevensixty}{7_{60}}
\newcommand{\sevensixtyone}{7_{61}}
\newcommand{\sevensixtytwo}{7_{62}}
\newcommand{\sevensixtythree}{7_{63}}
\newcommand{\sevensixtyfour}{7_{64}}
\newcommand{\sevensixtyfive}{7_{65}}
\newcommand{\sevensixtysix}{7_{66}}
\newcommand{\sevensixtyseven}{7_{67}}
\newcommand{\sevensixtyeight}{7_{68}}
\newcommand{\sevensixtynine}{7_{69}}
\newcommand{\fffbox}{\makebox}
\newcommand{\draftMP}[1]{\ifdraft{\color{blue}#1}\fi}
\newcommand{\draftGB}[1]{\ifdraft{\color{red}#1}\fi}
\newcommand{\draftYW}[1]{\ifdraft{\color{orange}#1}\fi}
\newcommand{\draftGP}[1]{\ifdraft{\color{teal}#1}\fi}
\numberwithin{equation}{section}
\numberwithin{figure}{section}
\title{A table 
of genus two handlebody-knots 
with seven crossings}
\author[G.\ Bellettini]{Giovanni Bellettini}
\address{Dipartimento di Scienze Matematiche, Informatiche e Fisiche,
via delle Scienze 206, 33100 Udine UD, Italy
and International Centre for Theoretical Physics ICTP,
Mathematics Section, 34151 Trieste, Italy
}
\email{giovanni.bellettini@uniud.it}
\author[G.\ Paolini]{Giovanni Paolini}
\address{Dipartimento di Matematica, Universit\`a di Bologna, Piazza di Porta
San Donato 5, 40126 Bologna, Italy}
\email{g.paolini@unibo.it}
\author[M.\ Paolini]{Maurizio Paolini}
\address{Dipartimento di Matematica e Fisica, Universit\`a Cattolica del Sacro Cuore, 25121 Brescia, Italy}
\email{maurizio.paolini@unicatt.it}
\author[Y.\ S.\ Wang]{Yi-Sheng Wang}
\address{National Sun Yat-sen University, Kaohsiung, Taiwan}
\email{yisheng@math.nsysu.edu.tw}
\date{\today}
\subjclass{57K12, 57M15, 05C30 
\draftGB{check subject classification numbers, I changed from 2010
to 2020}}
\keywords{handlebody-knot table, spatial graphs}
\begin{document}

%{\tiny \tableofcontents}
\thanks{}

\begin{abstract}
We enumerate all genus two handlebody-knots with seven crossings, up to mirror image, %Entries of the list
%are considered up to mirror image. This 
extending the Ishii-Kishimoto-Moriuchi-Suzuki table.
% \cite{IshKisMorSuz:12}.
%Due to the large number of diagrams under scrutiny an adaptive strategy has been adopted when
%selecting which invariant to use in order to distinguish entries in the table.
%A few cases (hard pairs) require special analysis in order to prove inequivalence.
\end{abstract}

\maketitle

%%%%%%%%%%%%%%%%%%%%%%%%%%%%%%%%%%%%%%%%%%%%%%%%%%%%%%%%%%%%%%%%%%%%%%%%%%%%%
%
% for any new section simply add the corresponding two lines here
% and modify accordingly the Makefile
%
%%%%%%%%%%%%%%%%%%%%%%%%%%%%%%%%%%%%%%%%%%%%%%%%%%%%%%%%%%%%%%%%%%%%%%%%%%%%%

\draftMP{DraftMP comments by Maurizio}
\draftYW{DraftYW comments by Yi-Sheng}
\draftGB{DraftGB comments by Giovanni B}
\draftGP{DraftGP comments by Giove}

%%%%%%%%%%%%%%%%%%%%%%%%%%%%%%%%%%%%%%%%%%%%%%%%%%%%%%%%%%%%%%%%%%%%%%%%%%%%%
\section{Introduction}\label{sec:intro}

A \emph{genus $g$ handlebody-knot} is an embedded handlebody $V$ of genus $g$ in the oriented $3$-sphere $\sphere$, and a 
\emph{spatial graph} is an embedded finite graph 
$\Gamma$ in $\sphere$. A spine of a handlebody-knot is a spatial graph, and a regular neighborhood of a spatial graph is a handlebody-knot. While every spatial graph has a unique regular neighborhood, up to isotopy, 
a handlebody-knot can have infinitely many different spines.
Two handlebody-knots $V,V'$ (resp.\ spatial graphs $\Gamma,\Gamma'$) are \emph{equivalent} if there is an orientation-preserving self-homeomorphism of $\sphere$ sending $V$ to $V'$ (resp.\ sending $\Gamma$ to $\Gamma'$); if the condition ``orientation-preserving'' is dropped, we say they are \emph{equivalent, up to mirror image}.  
The \emph{crossing number} of a handlebody-knot $V$ is the minimum of the crossing numbers of all its spines.
A handlebody-knot $V$ is \emph{irreducible} if there exists no $2$-sphere in $\sphere$ meeting 
$V$ at an essential disk of $V$.

Ishii-Kishimoto-Moriuchi-Suzuki \cite[Table 1]{IshKisMorSuz:12} enumerates, up to mirror image, all irreducible genus two handlebody-knots, up to six crossings. Our main result extends their table to seven crossing genus two handlebody-knots.
 
\begin{theorem}\label{teo:irred}
Table {\rm \ref{tab:table}} enumerates all irreducible genus two handlebody-knots with seven crossings, up to mirror image.
\end{theorem}

The construction of Table \ref{tab:table} allows us to obtain a table of reducible handlebody-knots, extending \cite[Table $4$]{IshKisMorSuz:12}.

\begin{theorem}\label{teo:red}
Table {\rm \ref{tab:tablered}} enumerates all reducible genus two handlebody-knots with seven crossings, up to mirror image.
\end{theorem}

The paper is structured so that the completeness of the two tables is given in Section \ref{sec:completeness},
while in Sections \ref{sec:uniqueness} and \ref{sec:hardpairs}, we prove that there are no duplications in Tables \ref{tab:table}, \ref{tab:tablered}.
% are present in said table ( ) is generally done by computation of
%suitable invariants, obtained by a computer software %starting from a description of the diagram.
%There are however five ``hard pairs'' for which no computable 
%invariant capable of distinguishing them
%was found by the authors.
%Distinguishing the members of each of these pairs requires specific and nontrivial analysis
%about geometric properties, for which we refer to Section \ref{sec:hardpairs} and to the forthcoming paper \cite{BePaPaWa:25}.
%detailed in Section \ref{sec:hardpairs}.
%
Section \ref{sec:irreducibility} explains how the irreducibility of handlebody-knots in Table \ref{tab:table} is obtained.

%Lastly, Section \ref{sec:graphs} discusses some relation of our tables to the tables of prime spatial graphs in Moriuchi \cite{Mor:07b}, \cite{Mor:09a}, up to seven crossings.

% is shown in   where we make use
%of the irreducibility criteria in \cite{BePaWa:20b}.
%Table \ref{tab:tablered} lists all \emph{reducible} handlebody-knots with seven crossings.
%This extends \cite[Table 4]{IshKisMorSuz:12}.

%Entries in Table \ref{tab:table} can sometimes be %obtained using the ``order-$2$ vertex connected sum''
%(in short ``$2$-sum'') $H \# K$ between a simpler handlebody-knot $H$ and a nontrivial standard knot $K$.
%\draftMP{Such decomposition is unique [A statement like this, if true, should be writted more carefully,
%perhaps we can simply drop this remark}, however
%it should be noted that the result of a $2$-sum is in general non-unique.
%Entries that can be factored as a $2$-sum are numbered from $7_{61}$ in Table \ref{tab:table}.

%Entries from $7_{40}$ to $7_{60}$ can be obtained as the result of a $3$-sum between two spatial graphs.
%This decomposition is in general non-unique.
%Section \ref{sec:graphs} is devoted to this topic and to the interrelation between handlebody-knots and spatial
%graphs.

%%%%%%%%%%%%%%%%%%%%%%%%%%%%%%%%%%%%%%%%%%%%%%%%%%%%%%%%%%%%%%%%%%%%%%%%%%%%%

%%%%%%%%%%%%%%%%%%%%%%%%%%%%%%%%%%%%%%%%%%%%%%%%%%%%%%%%%%%%%%%%%%%%%%%%%%%%%
%\section{Table}\label{sec:table}
\newdimen\entrysize \entrysize=59pt
%\ifdraft
%\newcommand{\fffbox}{\framebox}
%\else
%\newcommand{\fffbox}{\makebox}
%\fi

\newcommand{\tabentry}[1]{\fffbox[\entrysize]{\includegraphics[height=0.15\textwidth]{figures/t#1}}$7_{#1}$}
\newcommand{\tabentrym}[1]{\fffbox[\entrysize]{\includegraphics[height=0.15\textwidth]{figures/tmissing}}$7_{#1}$}
\ifdraft
\newcommand{\fffbox[\entrysize]{\begin{tikzpicture}[scale=0.8,every edge/.style={draw,thick}]\input{tikz/.tikz}\end{tikzpicture}}\!$7_{}$}[1]{\fffbox[\entrysize]{\begin{tikzpicture}[scale=0.8,every edge/.style={draw,thick}]
\draw[green,thin] (-1.05,-1.05) rectangle (1.05,1.05);
\input{tikz/#1.tikz}\end{tikzpicture}}$7_{#1}$}\!
\else
\newcommand{\fffbox[\entrysize]{\begin{tikzpicture}[scale=0.8,every edge/.style={draw,thick}]\input{tikz/.tikz}\end{tikzpicture}}\!$7_{}$}[1]{\fffbox[\entrysize]{\begin{tikzpicture}[scale=0.8,every edge/.style={draw,thick}]\input{tikz/#1.tikz}\end{tikzpicture}}\!$7_{#1}$}
\fi

\newcounter{tableone}
\setcounter{tableone}{\value{table}}

\begin{table}[ht]
\caption{Irreducible genus two handlebody-knots with seven crossings (continued on next page).}
\label{tab:table}
\fffbox[\entrysize]{\begin{tikzpicture}[scale=0.8,every edge/.style={draw,thick}]\input{tikz/1.tikz}\end{tikzpicture}}\!$7_{1}$%
\fffbox[\entrysize]{\begin{tikzpicture}[scale=0.8,every edge/.style={draw,thick}]\input{tikz/2.tikz}\end{tikzpicture}}\!$7_{2}$%
\fffbox[\entrysize]{\begin{tikzpicture}[scale=0.8,every edge/.style={draw,thick}]\input{tikz/3.tikz}\end{tikzpicture}}\!$7_{3}$%
\fffbox[\entrysize]{\begin{tikzpicture}[scale=0.8,every edge/.style={draw,thick}]\input{tikz/4.tikz}\end{tikzpicture}}\!$7_{4}$%
\fffbox[\entrysize]{\begin{tikzpicture}[scale=0.8,every edge/.style={draw,thick}]\input{tikz/5.tikz}\end{tikzpicture}}\!$7_{5}$%
\\
\smallskip
\fffbox[\entrysize]{\begin{tikzpicture}[scale=0.8,every edge/.style={draw,thick}]\input{tikz/6.tikz}\end{tikzpicture}}\!$7_{6}$%
\fffbox[\entrysize]{\begin{tikzpicture}[scale=0.8,every edge/.style={draw,thick}]\input{tikz/7.tikz}\end{tikzpicture}}\!$7_{7}$%
%\draftMP{\tabentry{7}}%
\fffbox[\entrysize]{\begin{tikzpicture}[scale=0.8,every edge/.style={draw,thick}]\input{tikz/8.tikz}\end{tikzpicture}}\!$7_{8}$%
\fffbox[\entrysize]{\begin{tikzpicture}[scale=0.8,every edge/.style={draw,thick}]\input{tikz/9.tikz}\end{tikzpicture}}\!$7_{9}$%
\fffbox[\entrysize]{\begin{tikzpicture}[scale=0.8,every edge/.style={draw,thick}]\input{tikz/10.tikz}\end{tikzpicture}}\!$7_{10}$%
\\
\smallskip
\fffbox[\entrysize]{\begin{tikzpicture}[scale=0.8,every edge/.style={draw,thick}]\input{tikz/11.tikz}\end{tikzpicture}}\!$7_{11}$%
\fffbox[\entrysize]{\begin{tikzpicture}[scale=0.8,every edge/.style={draw,thick}]\input{tikz/12.tikz}\end{tikzpicture}}\!$7_{12}$%
\fffbox[\entrysize]{\begin{tikzpicture}[scale=0.8,every edge/.style={draw,thick}]\input{tikz/13.tikz}\end{tikzpicture}}\!$7_{13}$%
\fffbox[\entrysize]{\begin{tikzpicture}[scale=0.8,every edge/.style={draw,thick}]\input{tikz/14.tikz}\end{tikzpicture}}\!$7_{14}$%
\fffbox[\entrysize]{\begin{tikzpicture}[scale=0.8,every edge/.style={draw,thick}]\input{tikz/15.tikz}\end{tikzpicture}}\!$7_{15}$%
\\
\smallskip
\fffbox[\entrysize]{\begin{tikzpicture}[scale=0.8,every edge/.style={draw,thick}]\input{tikz/16.tikz}\end{tikzpicture}}\!$7_{16}$%
\fffbox[\entrysize]{\begin{tikzpicture}[scale=0.8,every edge/.style={draw,thick}]\input{tikz/17.tikz}\end{tikzpicture}}\!$7_{17}$%
\fffbox[\entrysize]{\begin{tikzpicture}[scale=0.8,every edge/.style={draw,thick}]\input{tikz/18.tikz}\end{tikzpicture}}\!$7_{18}$%
\fffbox[\entrysize]{\begin{tikzpicture}[scale=0.8,every edge/.style={draw,thick}]\input{tikz/19.tikz}\end{tikzpicture}}\!$7_{19}$%
\fffbox[\entrysize]{\begin{tikzpicture}[scale=0.8,every edge/.style={draw,thick}]\input{tikz/20.tikz}\end{tikzpicture}}\!$7_{20}$%
\\
\smallskip
\fffbox[\entrysize]{\begin{tikzpicture}[scale=0.8,every edge/.style={draw,thick}]\input{tikz/21.tikz}\end{tikzpicture}}\!$7_{21}$%
\fffbox[\entrysize]{\begin{tikzpicture}[scale=0.8,every edge/.style={draw,thick}]\input{tikz/22.tikz}\end{tikzpicture}}\!$7_{22}$%
\fffbox[\entrysize]{\begin{tikzpicture}[scale=0.8,every edge/.style={draw,thick}]\input{tikz/23.tikz}\end{tikzpicture}}\!$7_{23}$%
\fffbox[\entrysize]{\begin{tikzpicture}[scale=0.8,every edge/.style={draw,thick}]\input{tikz/24.tikz}\end{tikzpicture}}\!$7_{24}$%
\fffbox[\entrysize]{\begin{tikzpicture}[scale=0.8,every edge/.style={draw,thick}]\input{tikz/25.tikz}\end{tikzpicture}}\!$7_{25}$%
\\
\smallskip
\fffbox[\entrysize]{\begin{tikzpicture}[scale=0.8,every edge/.style={draw,thick}]\input{tikz/26.tikz}\end{tikzpicture}}\!$7_{26}$%
\fffbox[\entrysize]{\begin{tikzpicture}[scale=0.8,every edge/.style={draw,thick}]\input{tikz/27.tikz}\end{tikzpicture}}\!$7_{27}$%
\fffbox[\entrysize]{\begin{tikzpicture}[scale=0.8,every edge/.style={draw,thick}]\input{tikz/28.tikz}\end{tikzpicture}}\!$7_{28}$%
\fffbox[\entrysize]{\begin{tikzpicture}[scale=0.8,every edge/.style={draw,thick}]\input{tikz/29.tikz}\end{tikzpicture}}\!$7_{29}$%
\fffbox[\entrysize]{\begin{tikzpicture}[scale=0.8,every edge/.style={draw,thick}]\input{tikz/30.tikz}\end{tikzpicture}}\!$7_{30}$%
\\
\smallskip
\fffbox[\entrysize]{\begin{tikzpicture}[scale=0.8,every edge/.style={draw,thick}]\input{tikz/31.tikz}\end{tikzpicture}}\!$7_{31}$%
\fffbox[\entrysize]{\begin{tikzpicture}[scale=0.8,every edge/.style={draw,thick}]\input{tikz/32.tikz}\end{tikzpicture}}\!$7_{32}$%
\fffbox[\entrysize]{\begin{tikzpicture}[scale=0.8,every edge/.style={draw,thick}]\input{tikz/33.tikz}\end{tikzpicture}}\!$7_{33}$%
\fffbox[\entrysize]{\begin{tikzpicture}[scale=0.8,every edge/.style={draw,thick}]\input{tikz/34.tikz}\end{tikzpicture}}\!$7_{34}$%
\fffbox[\entrysize]{\begin{tikzpicture}[scale=0.8,every edge/.style={draw,thick}]\input{tikz/35.tikz}\end{tikzpicture}}\!$7_{35}$%
\\
\smallskip
\fffbox[\entrysize]{\begin{tikzpicture}[scale=0.8,every edge/.style={draw,thick}]\input{tikz/36.tikz}\end{tikzpicture}}\!$7_{36}$%
\fffbox[\entrysize]{\begin{tikzpicture}[scale=0.8,every edge/.style={draw,thick}]\input{tikz/37.tikz}\end{tikzpicture}}\!$7_{37}$%
\fffbox[\entrysize]{\begin{tikzpicture}[scale=0.8,every edge/.style={draw,thick}]\input{tikz/38.tikz}\end{tikzpicture}}\!$7_{38}$%
\fffbox[\entrysize]{\begin{tikzpicture}[scale=0.8,every edge/.style={draw,thick}]\input{tikz/39.tikz}\end{tikzpicture}}\!$7_{39}$%
\fffbox[\entrysize]{\begin{tikzpicture}[scale=0.8,every edge/.style={draw,thick}]\input{tikz/40.tikz}\end{tikzpicture}}\!$7_{40}$%
\end{table}
\setcounter{table}{\value{tableone}}
\begin{table}[ht]
\caption{(continued)}
\fffbox[\entrysize]{\begin{tikzpicture}[scale=0.8,every edge/.style={draw,thick}]\input{tikz/41.tikz}\end{tikzpicture}}\!$7_{41}$%
\fffbox[\entrysize]{\begin{tikzpicture}[scale=0.8,every edge/.style={draw,thick}]\input{tikz/42.tikz}\end{tikzpicture}}\!$7_{42}$%
\fffbox[\entrysize]{\begin{tikzpicture}[scale=0.8,every edge/.style={draw,thick}]\input{tikz/43.tikz}\end{tikzpicture}}\!$7_{43}$%
\fffbox[\entrysize]{\begin{tikzpicture}[scale=0.8,every edge/.style={draw,thick}]\input{tikz/44.tikz}\end{tikzpicture}}\!$7_{44}$%
\fffbox[\entrysize]{\begin{tikzpicture}[scale=0.8,every edge/.style={draw,thick}]\input{tikz/45.tikz}\end{tikzpicture}}\!$7_{45}$%
\\
\smallskip
\fffbox[\entrysize]{\begin{tikzpicture}[scale=0.8,every edge/.style={draw,thick}]\input{tikz/46.tikz}\end{tikzpicture}}\!$7_{46}$%
\fffbox[\entrysize]{\begin{tikzpicture}[scale=0.8,every edge/.style={draw,thick}]\input{tikz/47.tikz}\end{tikzpicture}}\!$7_{47}$%
\fffbox[\entrysize]{\begin{tikzpicture}[scale=0.8,every edge/.style={draw,thick}]\input{tikz/48.tikz}\end{tikzpicture}}\!$7_{48}$%
\fffbox[\entrysize]{\begin{tikzpicture}[scale=0.8,every edge/.style={draw,thick}]\input{tikz/49.tikz}\end{tikzpicture}}\!$7_{49}$%
\fffbox[\entrysize]{\begin{tikzpicture}[scale=0.8,every edge/.style={draw,thick}]\input{tikz/50.tikz}\end{tikzpicture}}\!$7_{50}$%
\\
\smallskip
\fffbox[\entrysize]{\begin{tikzpicture}[scale=0.8,every edge/.style={draw,thick}]\input{tikz/51.tikz}\end{tikzpicture}}\!$7_{51}$%
\fffbox[\entrysize]{\begin{tikzpicture}[scale=0.8,every edge/.style={draw,thick}]\input{tikz/52.tikz}\end{tikzpicture}}\!$7_{52}$%
\fffbox[\entrysize]{\begin{tikzpicture}[scale=0.8,every edge/.style={draw,thick}]\input{tikz/53.tikz}\end{tikzpicture}}\!$7_{53}$%
\fffbox[\entrysize]{\begin{tikzpicture}[scale=0.8,every edge/.style={draw,thick}]\input{tikz/54.tikz}\end{tikzpicture}}\!$7_{54}$%
\fffbox[\entrysize]{\begin{tikzpicture}[scale=0.8,every edge/.style={draw,thick}]\input{tikz/55.tikz}\end{tikzpicture}}\!$7_{55}$%
\\
\smallskip
\fffbox[\entrysize]{\begin{tikzpicture}[scale=0.8,every edge/.style={draw,thick}]\input{tikz/56.tikz}\end{tikzpicture}}\!$7_{56}$%
%\draftMP{\tabentry{56}}%
\fffbox[\entrysize]{\begin{tikzpicture}[scale=0.8,every edge/.style={draw,thick}]\input{tikz/57.tikz}\end{tikzpicture}}\!$7_{57}$%
\fffbox[\entrysize]{\begin{tikzpicture}[scale=0.8,every edge/.style={draw,thick}]\input{tikz/58.tikz}\end{tikzpicture}}\!$7_{58}$%
\fffbox[\entrysize]{\begin{tikzpicture}[scale=0.8,every edge/.style={draw,thick}]\input{tikz/59.tikz}\end{tikzpicture}}\!$7_{59}$%
\fffbox[\entrysize]{\begin{tikzpicture}[scale=0.8,every edge/.style={draw,thick}]\input{tikz/60.tikz}\end{tikzpicture}}\!$7_{60}$%
\\
\smallskip
\fffbox[\entrysize]{\begin{tikzpicture}[scale=0.8,every edge/.style={draw,thick}]\input{tikz/61.tikz}\end{tikzpicture}}\!$7_{61}$%
\fffbox[\entrysize]{\begin{tikzpicture}[scale=0.8,every edge/.style={draw,thick}]\input{tikz/62.tikz}\end{tikzpicture}}\!$7_{62}$%
\fffbox[\entrysize]{\begin{tikzpicture}[scale=0.8,every edge/.style={draw,thick}]\input{tikz/63.tikz}\end{tikzpicture}}\!$7_{63}$%
\fffbox[\entrysize]{\begin{tikzpicture}[scale=0.8,every edge/.style={draw,thick}]\input{tikz/64.tikz}\end{tikzpicture}}\!$7_{64}$%
\fffbox[\entrysize]{\begin{tikzpicture}[scale=0.8,every edge/.style={draw,thick}]\input{tikz/65.tikz}\end{tikzpicture}}\!$7_{65}$%
\\
\smallskip
\fffbox[\entrysize]{\begin{tikzpicture}[scale=0.8,every edge/.style={draw,thick}]\input{tikz/66.tikz}\end{tikzpicture}}\!$7_{66}$%
\fffbox[\entrysize]{\begin{tikzpicture}[scale=0.8,every edge/.style={draw,thick}]\input{tikz/67.tikz}\end{tikzpicture}}\!$7_{67}$%
\fffbox[\entrysize]{\begin{tikzpicture}[scale=0.8,every edge/.style={draw,thick}]\input{tikz/68.tikz}\end{tikzpicture}}\!$7_{68}$%
\fffbox[\entrysize]{\begin{tikzpicture}[scale=0.8,every edge/.style={draw,thick}]\input{tikz/69.tikz}\end{tikzpicture}}\!$7_{69}$%
\end{table}

\newdimen\entrysize \entrysize=75pt

\newcommand{\redentry}[1]{\fffbox[\entrysize]{\includegraphics[height=0.15\textwidth]{figures/r#1}}}
\newcommand{\redentrym}[1]{\fffbox[\entrysize]{\includegraphics[height=0.15\textwidth]{figures/rmissing}}}
\ifdraft
\newcommand{\fffbox[\entrysize]{\vbox{\begin{tikzpicture}[scale=0.8,every edge/.style={draw,thick}]\input{tikz/.tikz}\end{tikzpicture} \\ }}}[2]{\fffbox[\entrysize]{\vbox{\begin{tikzpicture}[scale=0.8,every edge/.style={draw,thick}]
\draw[green,thin] (-1,-1) rectangle (1,1);
\input{tikz/#1.tikz}\end{tikzpicture} \\ #2}}}
\newcommand{\fffbox[\entrysize]{\vbox{\begin{tikzpicture}[scale=0.8,every edge/.style={draw,thick}]
\begin{scope}[yscale=-1]\input{tikz/.tikz}\end{scope}\end{tikzpicture} \\ }}}[2]{\fffbox[\entrysize]{\vbox{\begin{tikzpicture}[scale=0.8,every edge/.style={draw,thick}]
\begin{scope}[yscale=-1]
\draw[green,thin] (-1,-1) rectangle (1,1);
\input{tikz/#1.tikz}\end{scope}\end{tikzpicture} \\ #2}}}
\else
\newcommand{\fffbox[\entrysize]{\vbox{\begin{tikzpicture}[scale=0.8,every edge/.style={draw,thick}]\input{tikz/.tikz}\end{tikzpicture} \\ }}}[2]{\fffbox[\entrysize]{\vbox{\begin{tikzpicture}[scale=0.8,every edge/.style={draw,thick}]\input{tikz/#1.tikz}\end{tikzpicture} \\ #2}}}
\newcommand{\fffbox[\entrysize]{\vbox{\begin{tikzpicture}[scale=0.8,every edge/.style={draw,thick}]
\begin{scope}[yscale=-1]\input{tikz/.tikz}\end{scope}\end{tikzpicture} \\ }}}[2]{\fffbox[\entrysize]{\vbox{\begin{tikzpicture}[scale=0.8,every edge/.style={draw,thick}]
\begin{scope}[yscale=-1]\input{tikz/#1.tikz}\end{scope}\end{tikzpicture} \\ #2}}}
\fi

\begin{table}[ht]
\caption{Reducible handlebody-knots with seven crossings.}
\label{tab:tablered}
\fffbox[\entrysize]{\vbox{\begin{tikzpicture}[scale=0.8,every edge/.style={draw,thick}]
\begin{scope}[yscale=-1]\input{tikz/r1.tikz}\end{scope}\end{tikzpicture} \\ $\Ksevenone \onesum \Kzero$}}%
\fffbox[\entrysize]{\vbox{\begin{tikzpicture}[scale=0.8,every edge/.style={draw,thick}]\input{tikz/r2.tikz}\end{tikzpicture} \\ $\Kseventwo \onesum \Kzero$}}%
\fffbox[\entrysize]{\vbox{\begin{tikzpicture}[scale=0.8,every edge/.style={draw,thick}]\input{tikz/r3.tikz}\end{tikzpicture} \\ $\Kseventhree \onesum \Kzero$}}%
\fffbox[\entrysize]{\vbox{\begin{tikzpicture}[scale=0.8,every edge/.style={draw,thick}]\input{tikz/r4.tikz}\end{tikzpicture} \\ $\Ksevenfour \onesum \Kzero$}}%
\\
\smallskip
\fffbox[\entrysize]{\vbox{\begin{tikzpicture}[scale=0.8,every edge/.style={draw,thick}]\input{tikz/r5.tikz}\end{tikzpicture} \\ $\Ksevenfive \onesum \Kzero$}}%
\fffbox[\entrysize]{\vbox{\begin{tikzpicture}[scale=0.8,every edge/.style={draw,thick}]\input{tikz/r6.tikz}\end{tikzpicture} \\ $\Ksevensix \onesum \Kzero$}}%
\fffbox[\entrysize]{\vbox{\begin{tikzpicture}[scale=0.8,every edge/.style={draw,thick}]\input{tikz/r7.tikz}\end{tikzpicture} \\ $\Ksevenseven \onesum \Kzero$}}%
\fffbox[\entrysize]{\vbox{\begin{tikzpicture}[scale=0.8,every edge/.style={draw,thick}]\input{tikz/r8.tikz}\end{tikzpicture} \\ $(\Kfour \twosum \Kthree) \onesum \Kzero$}}%
\\
\smallskip
\fffbox[\entrysize]{\vbox{\begin{tikzpicture}[scale=0.8,every edge/.style={draw,thick}]\input{tikz/r9.tikz}\end{tikzpicture} \\ $\Kfour \onesum \Kthree$}}%
\end{table}

%%%%%%%%%%%%%%%%%%%%%%%%%%%%%%%%%%%%%%%%%%%%%%%%%%%%%%%%%%%%%%%%%%%%%%%%%%%%%
\section{Preliminaries}\label{sec:prelim}
\subsection{Diagrams}  
Given a spatial graph $\Gamma$ and $p\in \sphere-\Gamma$, then we say a projection $\pi:\sphere-\{p\}\simeq \mathbb{R}^3\rightarrow \mathbb{R}^2$ is \emph{regular} with respect to $\Gamma$ if $\pi$ is in general position \cite{Zee:63} (see \cite[Section $4.4$]{Buo:03}). In particular, the image $\pi(\Gamma)$ 
is a plane graph with vertices the union of 
double points of $\pi$ and vertices of $\Gamma$. 

Given a regular projection $\pi$ with respect to $\Gamma$, a \emph{diagram} of $\Gamma$ is given by replacing each double point in $\pi(\Gamma)$ 
with a \emph{crossing}, where the broken arc corresponds 
to the edge of $\Gamma$ underneath the edge the unbroken arc corresponds to in $\mathbb{R}^3\simeq \sphere-\{p\}$; see Fig.\ \ref{fig:over_under_crossings}.
A \emph{diagram} of a handlebody-knot $V$ is a diagram of one of its spines.
The plane graph $\pi(\Gamma)$ is called \emph{the underlying graph} of the diagram.

The \emph{crossing number} of a diagram of a spatial graph $\Gamma$ (resp.\ handlebody-knot $V$) is the number of the crossings the diagram has. The \emph{crossing number} $c(\Gamma)$ of $\Gamma$ (resp.\ $c(V)$ or $V$) 
is the minimum of the crossing numbers over all diagrams of $\Gamma$ (resp.\ $V$). 
A diagram of $\Gamma$ (resp.\ $V$) that attains the crossing number of $\Gamma$ (resp.\ $V$) 
is called a \emph{minimal diagram}. A minimal diagram of $\Gamma$ is in general not unique.

Given a diagram $D$ of a spine $\Gamma$ of a handlebody-knot $V$, observe that if $v\in D$ 
is a vertex of valence greater than $3$ and 
$\rnbhd{v}\subset \mathbb{R}^2$ is a regular neighborhood of $v$ with $D_v:=D\cap \rnbhd{v}$ 
a deformation retract of $\rnbhd{v}$, 
then we can replace $D_v$ with a new graph 
$\tilde{D}_v$ where $\tilde D_v\cap \partial \rnbhd{v}=D_v\cap \partial \rnbhd{v}$
and all vertices of $\tilde D_v$ in the interior of $\rnbhd{v}$ 
are trivalent; see Fig.\ \ref{fig:turn_trivalent}.
In particular, every handlebody-knot has a minimal 
diagram which is a diagram of a trivalent spine---namely, all vertices are trivalent. 

\begin{figure}
\begin{subfigure}{.48\linewidth}
\centering
\includegraphics[scale=.12]{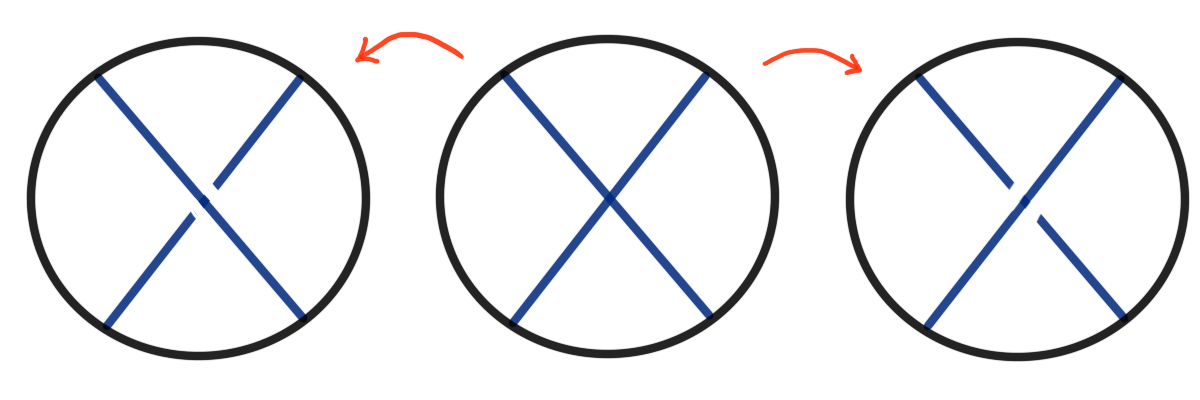} 
\caption{Turning $4$-valent vertices into crossings.}
\label{fig:over_under_crossings}
\end{subfigure}
\begin{subfigure}{.48\linewidth}
\centering
\includegraphics[scale=.12]{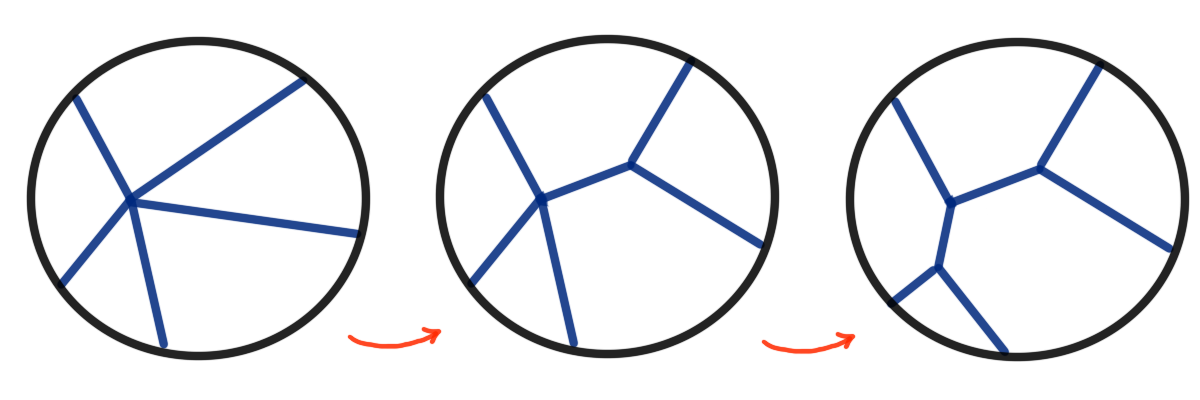}
\caption{Turning into trivalent vertices.}
\label{fig:turn_trivalent}
\end{subfigure} 
\caption{}
\end{figure}

A trivalent spine of a genus $g$ handlebody-knot 
has $2g-2$ trivalent vertices. Therefore, a trivalent spine of a genus two handlebody-knot is either a spatial theta graph or a spatial handcuff graph. For the sake of simplicity, hereinafter, a minimal diagram always refers to a minimal diagram of a trivalent spatial graph or a trivalent spine of a handlebody-knot.

Recall that the \emph{edge-connectivity} of a graph is the minimum number of edges whose deletion disconnects the graph. A graph is $e$-connected if it has edge-connectivity greater than or equal to $e$.
A diagram of a spatial graph is said to have \emph{connectivity $e$} (resp.\ to be \emph{$e$-connected}) if its underlying graph has edge-connectivity $e$ (resp.\ is $e$-connected). 
 
To study handlebody-knots in terms of their diagrams, we recall that \emph{generalized Reidemeister moves} are the local changes depicted in Figs.\ \ref{fig:reidemeister_a}
and \ref{fig:reidemeister_b} and an \emph{IH-move} is the local change in Fig.\ \ref{fig:IH-move}.

\begin{theorem}[\text{\cite[Corollary $2$]{Ish:08}}]\label{thm:IH_move_handlebody_knot}
Two handlebody-knots  
are equivalent if and only if their diagrams
are related by a finite sequence of 
generalized Reidemeister moves and IH-moves.
\end{theorem}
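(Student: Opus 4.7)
The plan is to split the biconditional into two directions. For the \emph{if} direction I would verify that each of the generalized Reidemeister moves and the IH-move preserves the handlebody-knot equivalence class. The generalized Reidemeister moves correspond to an ambient isotopy of the underlying spatial graph in $\sphere$; since a regular neighborhood is well-defined up to ambient isotopy, the corresponding handlebody-knots are equivalent. For the IH-move, the two local patterns are spines of a common $3$-ball (both are trees with two trivalent vertices and four endpoints on the ball's boundary in the same cyclic order), and one is carried to the other by a self-homeomorphism of the ball fixing the boundary pointwise; extending by the identity yields an ambient isotopy of $\sphere$ sending one regular neighborhood to the other. Hence the move does not alter the handlebody-knot up to ambient isotopy.

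For the \emph{only if} direction, let $V$ and $V'$ be equivalent handlebody-knots with diagrams $D$ and $D'$ of trivalent spines $\Gamma$ and $\Gamma'$. The strategy is to factor the equivalence through the spatial graph version of Reidemeister's theorem, which states that two ambient isotopic spatial graphs in $\sphere$ have diagrams related by a finite sequence of generalized Reidemeister moves (Kauffman--Yetter). It therefore suffices to produce, inside $V'$, a trivalent spine ambient isotopic to the image of $\Gamma$ under an orientation-preserving self-homeomorphism of $\sphere$ sending $V$ to $V'$, and then to connect that spine to $\Gamma'$ by moves visible on a diagram level.

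The heart of the argument, and the main obstacle, is the purely three-dimensional statement that any two trivalent spines of a fixed handlebody $V$ are related by a finite sequence of IH-moves performed inside $V$. I would prove this via a common refinement: choose a polyhedron in $V$ containing both spines as subcomplexes and to which $V$ deformation retracts, and then collapse it to each spine in turn while tracking the elementary collapses. At the trivalent level each elementary move is a slide of one edge over an adjacent edge along a properly embedded disk in $V$, which is exactly an IH-move. Some PL general-position care is needed to ensure that the intermediate spines admit trivalent diagrams and that the IH-moves can be realized one at a time on a generic projection, but the underlying argument is standard handle calculus for handlebodies. Combining this with the spatial graph Reidemeister theorem produces the required sequence of generalized Reidemeister moves and IH-moves transforming $D$ into $D'$.
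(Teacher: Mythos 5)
The paper offers no proof of this statement: it is quoted from Ishii \cite[Corollary 2]{Ish:08}, so your attempt has to be measured against the argument there. Your architecture is the right one and matches it---the ``if'' direction by checking that each move preserves the regular neighborhood, the ``only if'' direction by combining the Reidemeister theorem for spatial graphs with the transitivity of IH-moves on the set of trivalent spines of a fixed handlebody. One small repair first: in the ``if'' direction there is no self-homeomorphism of the ball fixing the boundary pointwise carrying the I-shaped tree onto the H-shaped tree, since the two trees partition the four boundary points differently between their two trivalent vertices and any such homeomorphism preserves that partition. The correct statement is about neighborhoods: both trees have regular neighborhoods that are unknotted $3$-balls meeting the boundary sphere in the same four disks, hence isotopic rel boundary, so the handlebody-knot is unchanged.

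The substantive gap is in your key lemma, that any two trivalent spines of $V$ are related by IH-moves inside $V$. This is exactly where the content of Ishii's theorem lives, and your sketch does not reach it. A polyhedron to which $V$ merely deformation retracts is too weak to extract elementary moves (you need collapsibility), the asserted common subcomplex containing both spines is not constructed, and the claim that ``each elementary move is a slide of one edge over an adjacent edge, which is exactly an IH-move'' conflates three different things: an elementary expansion or collapse of a $2$-complex is not an edge slide of a graph, and an edge slide is not a single IH-move---sliding an endpoint past one trivalent vertex is realized by IH-moves, but a full handle slide around a loop of the spine is a composite that must be decomposed, including the degenerate cases where the sliding edge passes its own attaching vertex. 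The standard way to close this is dual: pass to complete meridian-disk systems of $V$, show any two are related by disk slides via an innermost-disk argument on their intersection, and translate disk slides into edge slides and then into IH-moves on the dual trivalent spines. As written, your proposal names the right destination but does not prove the step that matters.
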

\begin{figure}[ht]
\includegraphics[height=0.18\textwidth]{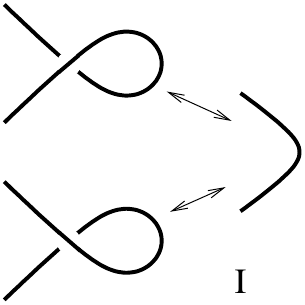}~~
\includegraphics[height=0.18\textwidth]{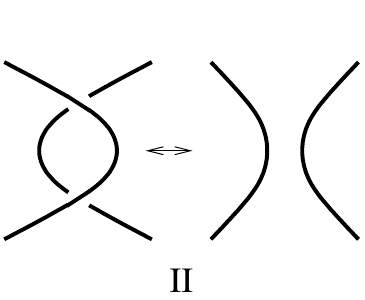}~~
\includegraphics[height=0.18\textwidth]{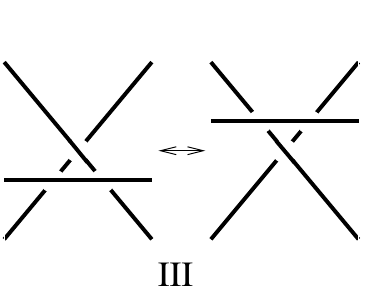}
\caption{Classical Reidemeister moves of type I, II, III.}
\label{fig:reidemeister_a}
\end{figure}
\begin{figure}[ht]
\includegraphics[height=0.18\textwidth]{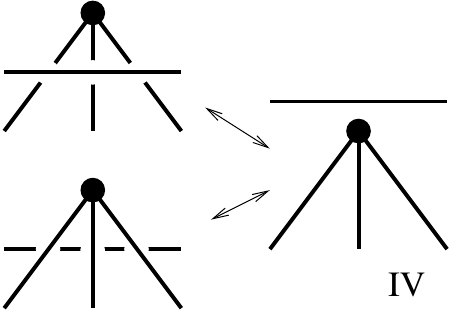}~~~~
\includegraphics[height=0.18\textwidth]{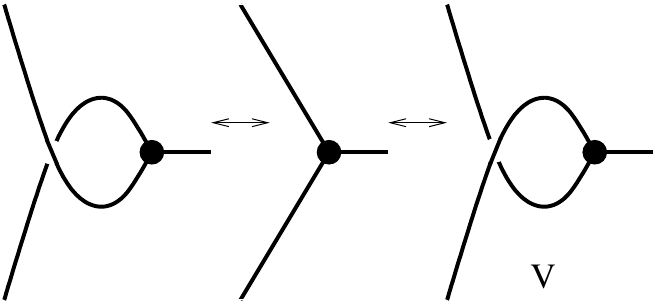}
\caption{Reidemeister moves IV and V involving a trivalent vertex.}
\label{fig:reidemeister_b}
\end{figure}
\begin{figure}[ht]
\includegraphics[height=0.18\textwidth]{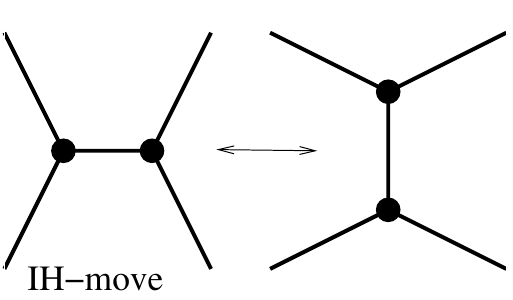}
\caption{IH-move.}
\label{fig:IH-move}
\end{figure}

\nada{ 
Now, we recall the order-$2$ vertex connected sum (in short ``$2$-sum'')
between spatial graphs \cite{Mor:09a} 
for producing handlebody-links represented by 
minimal diagrams with connectivity $2$. 
A trivial ball-arc pair of a spatial graph $\graph$
is a $3$-ball $B$ with $\graph\cap B$ a trivial tangle in $B$,
i.e. $B$ cuts out a small portion of an arc of $\graph$; 
it is oriented if an orientation of $\graph\cap B$ is given.
\begin{definition}[\textbf{Knot sum}] 
Given two spatial graphs $\graph_1,\graph_2$ with oriented 
trivial ball-arc pairs $B_1,B_2$ of $\graph_1,\graph_2$, respectively,
their order-$2$ vertex connected sum $(\graph_1,B_1)\#(\graph_2,B_2)$ 
is a spatial graph obtained by removing the interiors of
$B_1,B_2$ and gluing the resulting manifolds
$\Compl{B_1}$ and $\Compl{B_2}$ by 
an orientation-preserving homeomorphism
\[h:\big(\partial (\Compl{B_1}\big),\partial(\graph_1\cap B_1))
\rightarrow (\partial \big(\Compl{B_2}\big),\partial (\graph_2\cap B_2)).\]
The notation $\graph_1\#\graph_2$ denotes the set
(sometimes it may denote an element of the set) of 
order-$2$ vertex connected sums
of $\graph_1,\graph_2$ with all possible trivial ball-arc pairs.
\end{definition} 
Since an order-$2$ vertex connected sum depends only
on the edges of $\graph_1,\graph_2$ intersecting with $B_1,B_2$
and their orientations, $\graph_1\# \graph_2$ 
is a finite set.
\draftMP{Curious: are there examples of cases where changing the orientation
of a component produces a non-equivalent result?}
%
%Thus there are finitely many 
%$(n,1)$-handlebody-links up to $6$ crossings admitting a 
%minimal diagram with $2$-connectivity, 
%and they can be recovered by 
%performing the order-$2$ vertex connected sum operation
%between links admitting a minimal diagram with $4$-connectivity
%and spatial graphs admitting a minimal diagram with
%$3$-connectivity up to $6$ crossings.
 
%\draftPPP{Sometimes it is written ``order 2-vertex connected sum'' and sometimes ``order 2 vertex connected sum''. I prefer the second one.
%Same for ``type-2'' and ``type-3''.}\draftYYY{Sorry...I should put 
%the hyphen between first two words, that is how it is used in 
%cited references...I have corrected it. Thanks.}
}

%%%%%%%%%%%%%%%%%%%%%%%%%%%%%%%%%%%%%%%%%%%%%%%%%%%%%%%%%%%%%%%%%%%%%%%%%%%%%
\section{Completeness}\label{sec:completeness}

Hereinafter, for the sake of simplicity, unless otherwise specified, by a handlebody-knot, we understand a genus two handlebody-knot.
To enumerate all seven crossing handlebody-knots, our approach is to divide minimal diagrams into three classes: 
\begin{enumerate}
\item\label{itm:greater} $3$-connected minimal diagrams;
\item\label{itm:two} Minimal diagrams with connectivity $2$;
\item\label{itm:one} Minimal diagrams with connectivity $1$.   
\end{enumerate}
 
\subsection{$3$-connected diagrams}\label{sec:completeness3}
Our goal here is to find all seven crossing handlebody-knots that have a $3$-connected minimal diagram.
To do so, we first enumerate all $3$-connected plane graphs with two trivalent vertices and seven quadrivalent vertices,
and then replace each quadrivalent vertex with a crossing.%
\footnote{
\draftGB{we have to specify that the two notions
are different for us, it seems not always so in the literature}
Note that the Whitney theorem \cite{Whi:32} is not applicable here since a $3$-connected graph may not be $3$-vertex-connected.}
%
%Thus to enumerate the underlying graphs of minimal diagrams amounts to enumerating all embeddings of all planar $3$-connected graphs with two trivalent vertices and seven quadrivalent vertices.
%
The first part is done by a code devised by the second author, and it results in $908$ such plane graphs, up to mirror image. 
Since, for each quadrivalent vertex, there are two possible choices of crossings (see Fig.\ \ref{fig:over_under_crossings}), these $908$ plane graphs produce an unmanageable number, $908\cdot 2^6$, of diagrams, up to mirror image. To reduce the number of diagrams, a software developed by the third author is implemented to drop diagrams that 
\begin{itemize}
\item{} represent a handlebody-link with more than one component, or
\item{} is recognized to be nonminimal via moves in Figs.\  \ref{fig:autononminimal} and \ref{fig:moreautononminimal}.
%, i.e. easily transformed by using the generalized Reidemeister
%moves to a diagram with fewer crossings (see Subsection \ref{sec:autononminimal}),
\end{itemize}
In addition, one choice is selected for 
any pair of over/under crossings
\begin{itemize}
	\item{} that results in embeddings having a portion depicted in Fig.\ \ref{fig:autopairs} or
	\item{} that are directly connected by an IH move (Fig.\ \ref{fig:IH-move}.
\end{itemize}

Consequently, we obtain a refined set $\mathcal D_3$ of %
\draftMP{A recent improvement in appcontour allows to reduce the previously reported number of $932$
to $910$, since now also the "barrel-fork" subdiagram is recognized as part of a nonminimal diagram
and then further reduced to $808$ by recognizing diagrams that are equivalent up to an IH move (only
one per pair is retained).
The process to arrive at the $808$ number consists in first computing the 'pairs7' list of embeddings
plus choices ($1632$ pairs), then the initial state of the ROOT of the bigtree is populated with the
resulting $1632$ embeddings. Finally the scripts "./search01x.sh" and ``./search{\_}uptoih.sh"  recognize
situations corresponding to the IH+V move of Figure \ref{fig:autononminimal} and pairs of
diagrams directly connected by an IH move. }%
% $910$ (before reducing up to IH move)
$808$
diagrams, where, for every seven crossing handlebody-knot $V$ with a $3$-connected minimal diagram, there is a minimal diagram in $\mathcal{D}_3$ representing $V$.
We arrive at the final number of $808$ in two steps, first all situations depicted in Figures \ref{fig:autononminimal}
and \ref{fig:moreautononminimal} with the exception of the IH+V move are recognized by the \texttt{appcontour} software
and used to generate a set of $1632$ diagrams.
This set is then reduced to $808$ entries by automatically recognizing situations matching the move ``IH+V'' of Figure
\ref{fig:moreautononminimal} and, for any pair of diagrams that are directly connected by an IH move only one is retained.

\begin{figure}
	\includegraphics[height=0.3\textwidth]{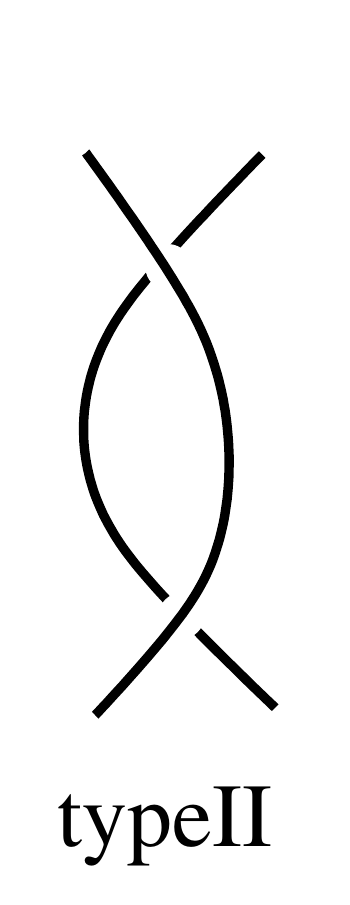}
	~~
	\includegraphics[height=0.3\textwidth]{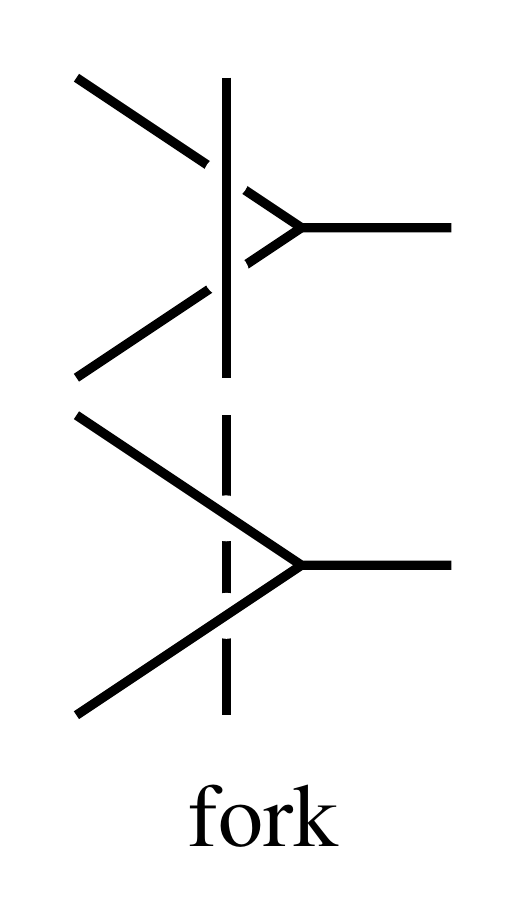}
	~~
	\includegraphics[height=0.3\textwidth]{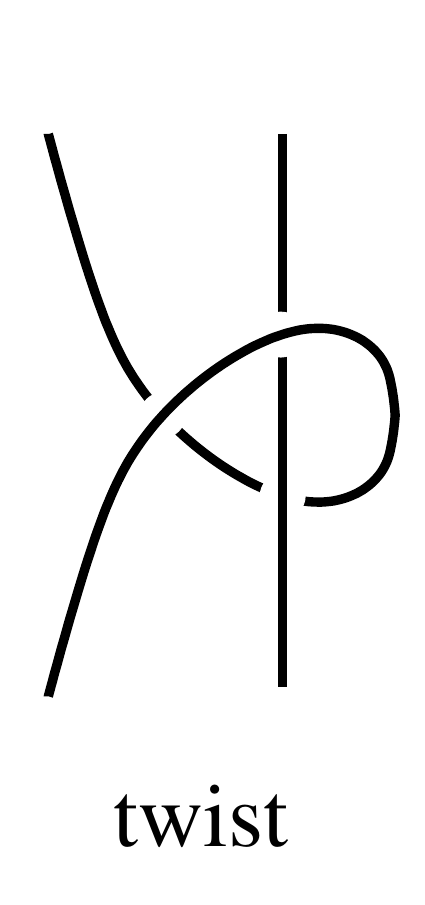}
	~~
	\includegraphics[height=0.3\textwidth]{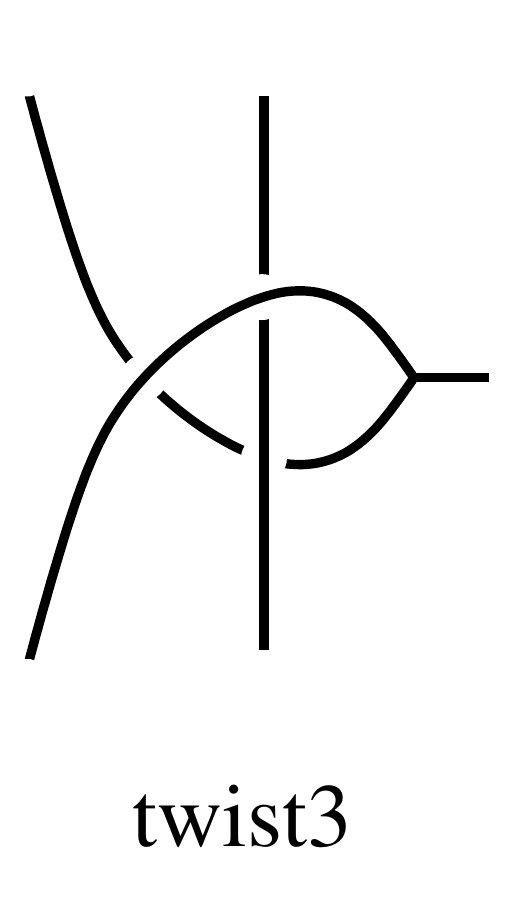}
	~~
	\includegraphics[height=0.3\textwidth]{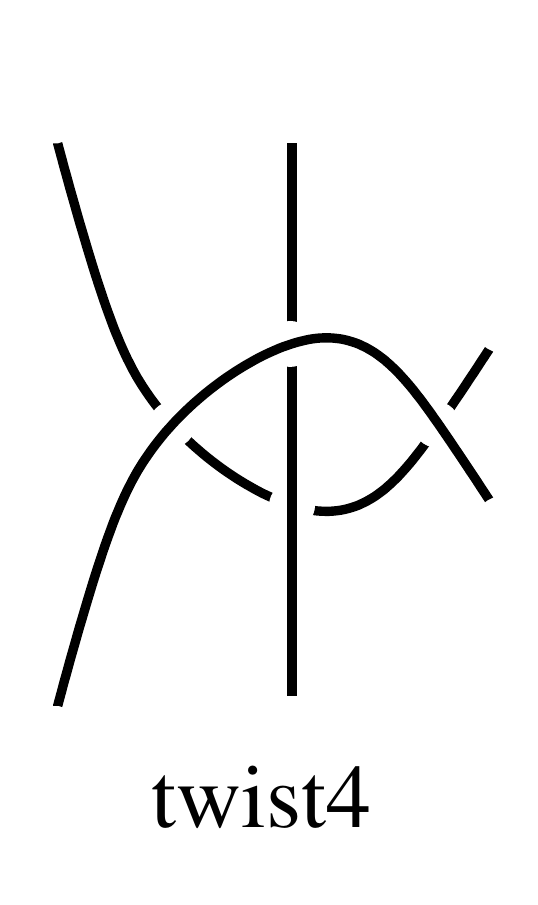}
	\caption{Crossing-reducing moves.  They consist of a sequence of Reidemeister moves
	as follows (from left to right):
	II~; II$^{-1}$ IV~; III I~; III V~; III II~. Notation ``II$^{-1}$" means that move II
	is performed in the ``crossing-increasing'' direction.}
	\label{fig:autononminimal}
\end{figure}

\begin{figure}
	\includegraphics[height=0.3\textwidth]{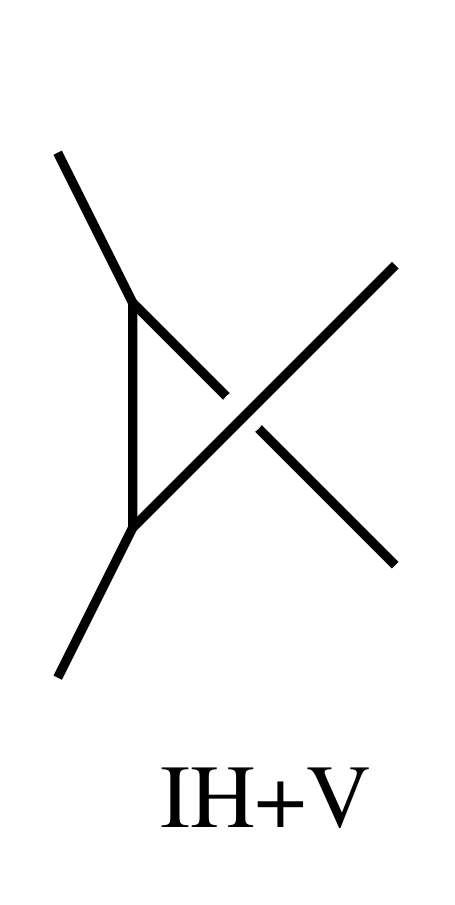}
	~~~~~~~
	\includegraphics[height=0.3\textwidth]{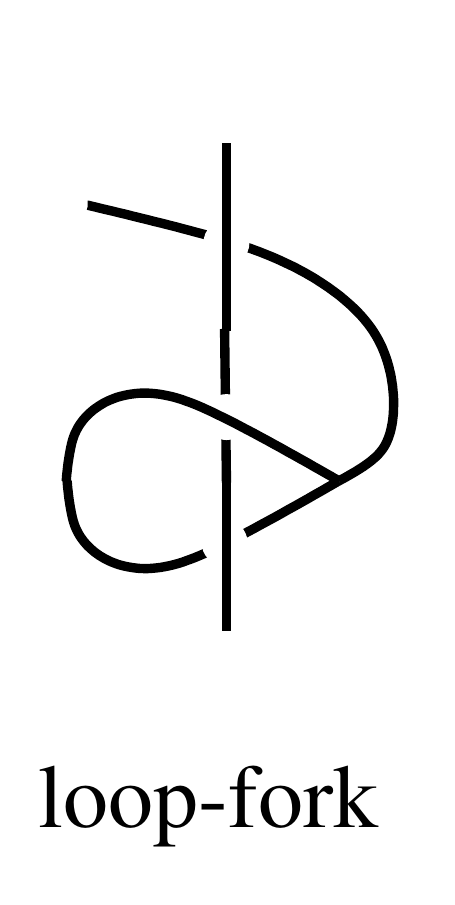}
	~~~~~~~
	\includegraphics[height=0.3\textwidth]{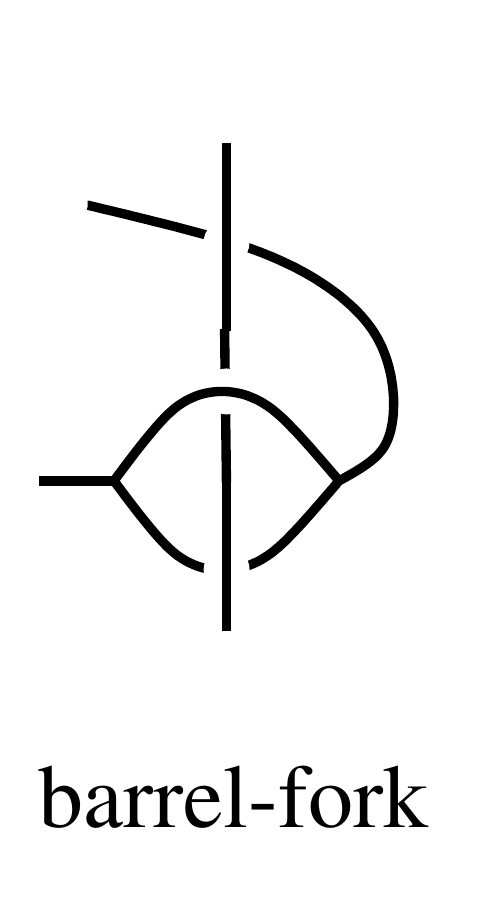}
	\caption{More crossing-reducing moves. ``IH+V'' is an IH-move followed by a V move (Fig. \ref{fig:reidemeister_b});
	``loop-fork'' and ``barrel-fork'' are performed by twisting the loop/barrel (see Fig.
	\ref{fig:autopairs}) and then applying the fork move
	(Fig.\ \ref{fig:autononminimal}).}
	\label{fig:moreautononminimal}
\end{figure}

%\begin{figure}[ht]
\begin{figure}
	\includegraphics[height=0.3\textwidth]{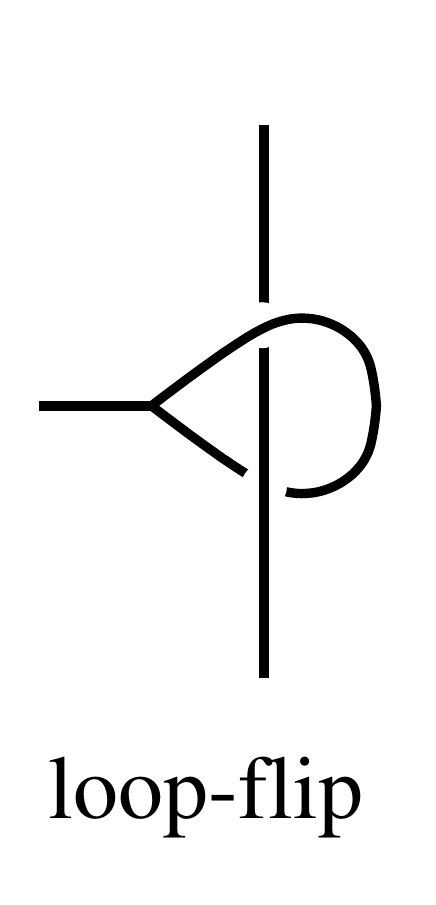}
	~~~~~~
	\includegraphics[height=0.3\textwidth]{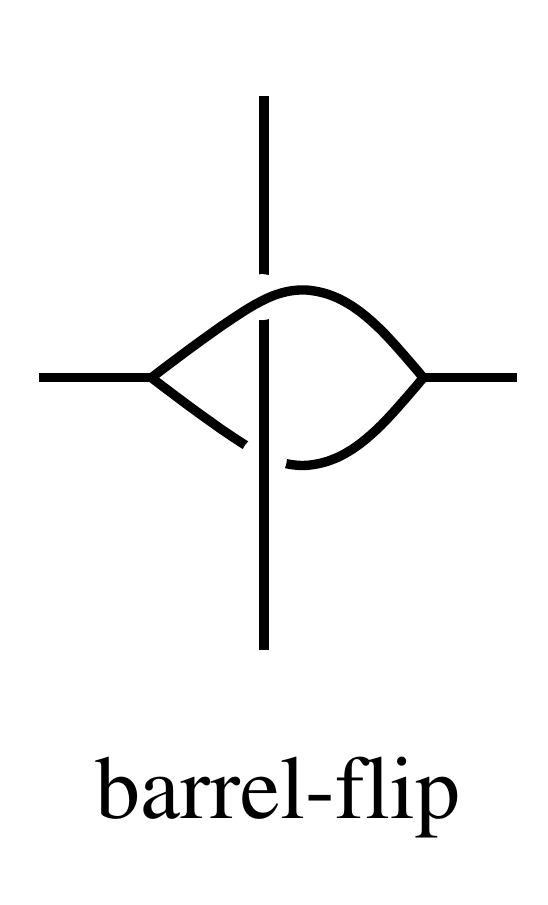}
	\caption{Loop/barrel flipping moves.  They consist of the following sequence of Reidemeister moves
	``loop-flip'': II$^{-1}$ V III I resulting in a reversal of the two overcrossings.
	``barrel-flip'': same sequence as for ``loop-flip'' with a V move replacing I,
	also resulting in a reversal of the overcrossings.}
	\label{fig:autopairs}
\end{figure}

The filtering process described by the three items above does not guarantee against nonminimal diagrams and
they must be manually uncovered during the {\it dynamic process} described in Section \ref{sec:bigtree}.
To this aim it greatly helps to include in the process also the handlebody-knots with fewer than
$7$ crossings, for which we have a complete list.
Indeed, a nonminimal $7$-crossings diagram must be indistinguishable by any invariant against some handlebody-knot
with fewer crossings raising the suspicion that it is actually equivalent to it.
If we are not able to manually prove equivalence, the
alternative is that it is actually minimal and we are dealing with a {\it hard-pair} for which we
must manually prove inequivalence.
Indeed we have two hard-pairs where one entry has fewer crossings (see Section \ref{sec:bigtree}).

Concerning the exclusion of handlebody-links we observe that the number of components of a diagram does not
depend on the way we chose the overpasses at crossings and can be easily computed in an automated way.

In \cite{supplement:26} we provide details for the sets described here, in particular the list of diagrams
in set $\mathcal{D}_3$, file \texttt{D3.csv}.

%do it all together (including reducible case)

\nada{
Minimal diagrams of irreducible handlebody-knots are at least bi-connected.
We first find all at least $3$-connected diagrams (next Subsection), then
we shall manually construct all possible bi-connected (and not $3$-connected)
diagrams (Subsection \ref{sec:completeness2}).

\subsection{Three-connected diagrams}

By forgetting the over/under-crossing information at crossings of a 
$3$-connected diagram, we
can regard it as a $3$-connected planar graph together with an embedding in $\Sbb^2$.
In our case we are interested in graphs with two vertices of degree $3$ and 7 vertices of degree
$4$ (in short, a $2+7$ graph).
A python code devised by the second author generates all $2+7$ tri-connected graphs and selects those that are planar,
i.e. that can be drawn on the plane with non-intersecting edges.
In view of the uniqueness embedding theorem by Whitney \cite{Whi:32}, a planar $3$-connected graph admits a unique
(up to mirror image) embedding in $\Sbb^2$, so that counting planar graphs is essentially the same as counting actual
plane graphs.
In this way we obtain $908$ ``embedded'' planar graphs, taken up to mirror image, each of which should then be augmented by
``over/under'' information for each vertex of degree $4$: two choices per vertex.

These augmented graphs represent all possible handlebody-knots/links diagrams with two trivalent nodes and
seven crossings, up to mirror image.

A subsequent scan drops all diagrams that

\begin{itemize}
\item{} represent a link (diagram of a handlebody with more than one connected components);
\item{} is recognized by a computer software to be nonminimal, i.e. easily transformed by using the generalized Reidemeister
moves to a diagram with fewer crossings (see Subsection \ref{sec:autononminimal});
\end{itemize}

Furthermore
\begin{itemize}
\item{} for any pair of over/under selections for an embedding that are recognized by a computer software to correspond to
equivalent diagrams up to mirror image, only one choice is selected (see Subsection \ref{sec:autopairs}).
\end{itemize}

The remaining set of diagrams (there are $932$ of them) will represent all possible tri-connected handlebody-knots with
up to 7 crossings, usually more than once.
}
%For reasons related to the subsequent task of recognising equivalent embeddings (see Section \ref{sec:uniqueness}
%we also include all diagrams with less than seven crossings, reduced in the same way described above.

%For the same reason we also include all diagrams of manually constructed two-sums (see next subsection) and all manually
%constructed reducible handlebody-knots (Section \ref{sec:reducible}).

\subsection{Diagrams with connectivity $2$}
The aim here is to find all seven crossing handlebody-knots that have a minimal diagram with connectivity $2$ and no minimal diagram with connectivity $1$.

\label{sec:completeness2}
\subsection*{$2$-sum of diagrams and spatial graphs}
A $2$-sum of two spatial graphs $\Gamma_1,\Gamma_2$ 
is constructed as follows \cite{Mor:09a}: 
For each $i$, choose an edge $e_i$ in $\Gamma_i$; secondly, consider a $3$-ball $B_i$ with $B_i\cap\Gamma_i=B_i\cap e_i$; thirdly, orient $e_i$ so that $e_i$ is going out of $B_i$ at a point $a_i\in e_i\cap \partial B_i$ and going in $B_i$ at a point $b_i\in e_i\cap \partial B_i$.  
Then the \emph{order-$2$ vertex connected sum}, abbreviated to \emph{$2$-sum}, is the spatial graph given by gluing the exteriors
$\Compl{B_1}$ and $\Compl{B_2}$ with an orientation-reversing homeomorphism
$h:\partial \Compl{B_1}\rightarrow \partial \Compl{B_2}$ with $h(a_1)=b_2$, $h(b_1)=a_2$. The spatial graphs $\Gamma_1$, $\Gamma_2$ are called \emph{summands} of the $2$-sum.

Since a $2$-sum depends on the choice of edges in $\Gamma_i$ and a given orientation, it is not unique; for instance, Figs.\ \ref{fig:knot_edge}, \ref{fig:unknot_edge}, \ref{fig:connecting_edge} illustrate three different $2$-sums of an oriented knot $K$ and the spatial graph in Fig.\ \ref{fig:gamma}. If $K$ is not invertible, then two different orientations of the edge $e$ in Fig.\ \ref{fig:gamma} yield two different $2$-sums. 

\begin{figure}[t]
\begin{subfigure}{.23\linewidth}
\begin{overpic}[scale=.1,percent]{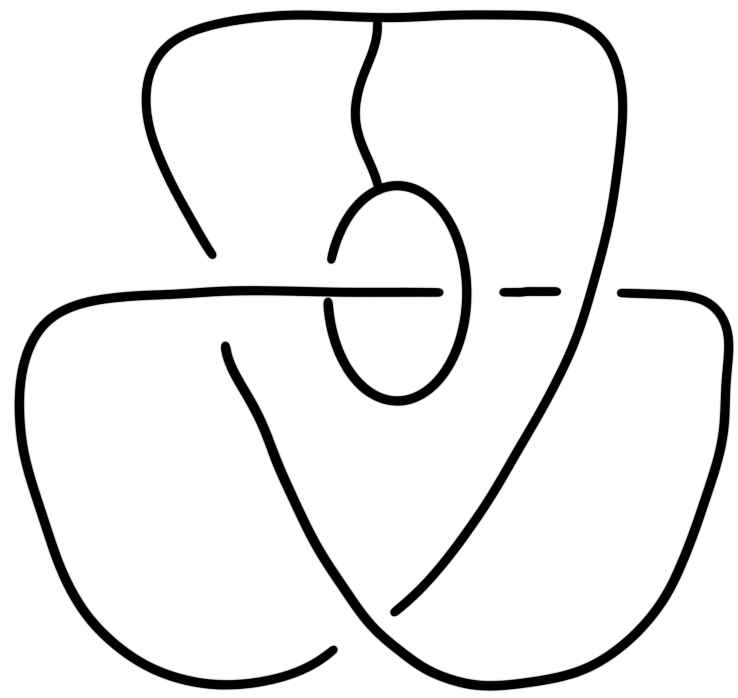} 
\put(53,77){$e$}
\end{overpic}
\caption{} 
\label{fig:gamma}
\end{subfigure}
\begin{subfigure}{.23\linewidth}
\begin{overpic}[scale=.1,percent]{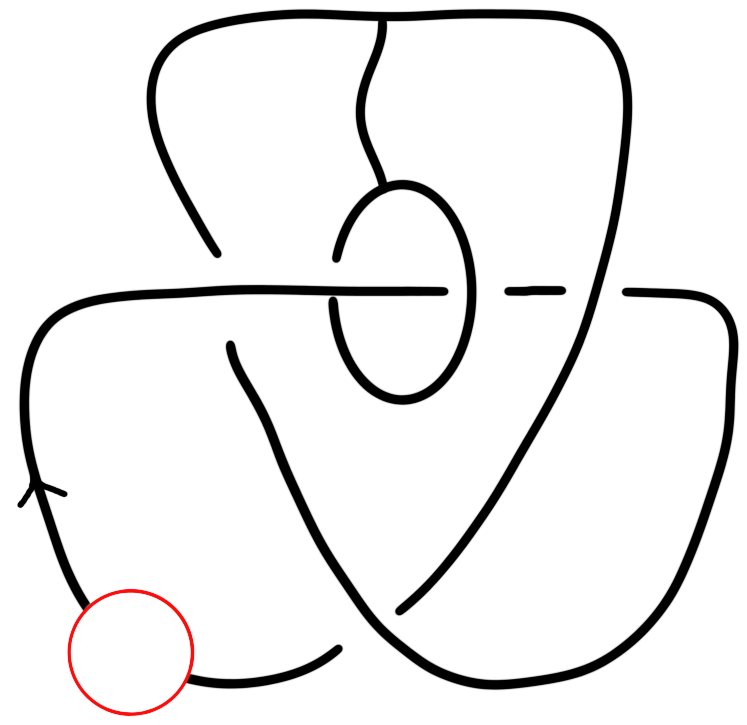} 
\put(11.8,4.8){$K$}	
\end{overpic}
\caption{} 
\label{fig:knot_edge} 
\end{subfigure}
\begin{subfigure}{.23\linewidth}
\begin{overpic}[scale=.1,percent]{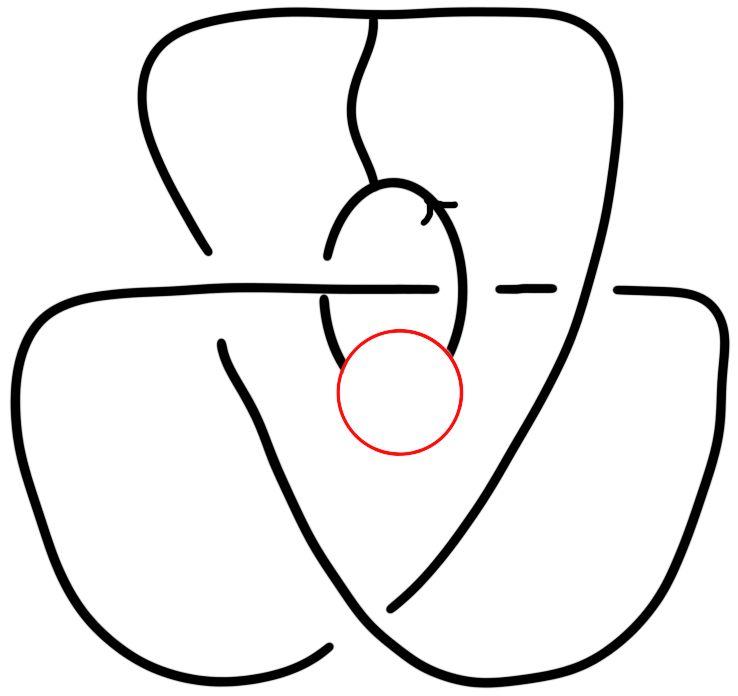} 
\put(48,36){$K$}		
\end{overpic}
\caption{} 
\label{fig:unknot_edge} 
\end{subfigure}
\begin{subfigure}{.23\linewidth}
\begin{overpic}[scale=.1,percent]{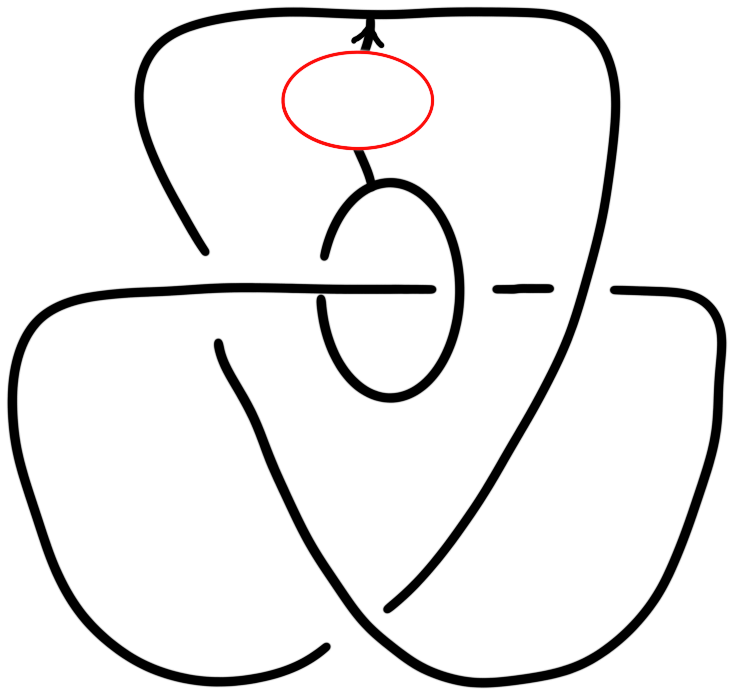}
\put(43,75.5){\small $K$}	
\end{overpic}
\caption{} 
\label{fig:connecting_edge} 
\end{subfigure}
\caption{$2$-sum.}
\label{fig:two_sum}
\end{figure}

We denote by $\Gamma_1\#\Gamma_2$ the set of all possible $2$-sums of $\Gamma_1$ or its mirror image and $\Gamma_2$ or its mirror image.

Given a \emph{diagram} $D$ of a handlebody-knot $V$, a prime factor $N\subset \mathbb{R}^2$ of $D$ is a disk $N$ with $\partial N$ meeting $D$ at two points $a,b$ such that $N\cap D$ together with an arc outside $N$ is a diagram of a prime knot $K$, called the \emph{induced prime knot}. A \emph{system} $\mathcal N$ of prime factors is a set of mutually disjoint prime factors, and it is \emph{maximal} if it is not a proper subset of another system of prime factors.  

Given a system $\mathcal{N}=\{N_1,\cdots, N_k\}$ of prime factors, let $\alpha_i$ be an arc in $N_i$ 
with $\partial \alpha_i = D\cap \partial N_i$, 
then $\big(D-(N_1\cup\cdots\cup N_k)\big)\cup (\alpha_1\cup\cdots\cup \alpha_k)$ is a diagram $D_0$ of a spatial handcuff or theta graph $\Gamma_0$; $D_0$ and $\Gamma_0$ are called the \emph{base diagram} and \emph{base graph}, respectively. The spatial graph $D$ represents can thus be regarded as a $2$-sum of $\Gamma_0$ and some prime knots.  

Suppose $D$ is minimal with $7$ crossings and connectivity $2$. Then $D$ admits a non-empty maximal system $\mathcal{N}=\{N_1,\cdots,N_k\}$ of prime factors, and 
the base graph $D_0$ is $3$-connected and minimal. Since $D$ has only $7$ crossings and a non-trivial knot has at least $3$ crossings, we have $k\leq 2$, namely,
\[\Gamma\in (\Gamma_0\# K_1) \text{ or } \Gamma\in (\Gamma_0\# K_1)\# K_2\]
where $K_1,K_2$ are the prime knots induced by
$N_1,N_2$, respectively. 
The base graph $\Gamma_0$ cannot be a trivial spatial handcuff since $V$ admits no minimal diagram with connectivity $1$. If $\Gamma_0$ is a trivial spatial $\theta$-graph, then, since $k\leq 2$, there is an edge in $D$ joining the two trivalent vertices. Applying
an IH-move along the edge turns $D$ into a diagram with connectivity $1$ without changing the number of crossings, contradicting our assumption. As a result, the base graph $\Gamma_0$ is non-trivial, and hence $D_0$ has at least two crossings. This implies $k=1$, and $K_1$ has at most five crossings, and $\Gamma_0$ has at most four crossings.

Since prime knots up to five crossings are all invertible, the set $\Gamma_0\# K_1$ contains at most six elements, depending on the arc in $\Gamma_0$ where the $2$-sum is performed and the chirality of $K_1$. Note that handlebody-knots so produced might not have seven crossings. 

To determine $\Gamma_0$, we note that, since 
$D_0$ is $3$-connected, it suffices to enumerate all spatial graphs that have a $3$-connected minimal diagram, up to four crossings. 
This is done by the code developed by the second author, which allows us to find all 
$3$-connected plane graphs with two trivalent vertices and up to four quadrivalent vertices. Such spatial graphs are enumerated in Table \ref{tab:graphs} (see \cite[Table $4$]{BePaPaWa:23} and \cite{Mor:09a,Mor:07b}).
Denote by $\op{Kn_m}$ the prime knot $\op{n_m}$ in the knot table. Then, since $\Gamma_0$ is in Table \ref{tab:graphs}, seven crossing handlebody-knots $V$ that have a minimal diagram with connectivity $2$ but no minimal diagram with connectivity $1$ can be enumerated as follows.

%want to argue it is not a 2-sum

\begin{table}[t]
	\caption{Spatial graphs up to four crossings.
	}
	\label{tab:graphs}
	\includegraphics[height=0.15\textwidth]{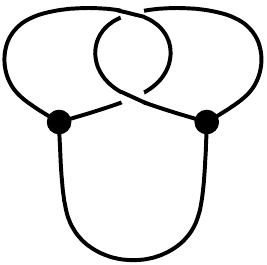}$\op{G2}$
	\includegraphics[height=0.15\textwidth]{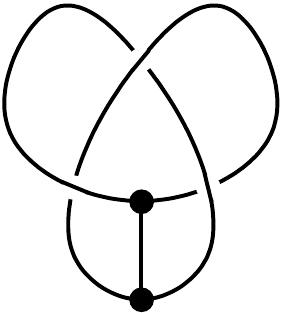}$\op{G3}$
	\includegraphics[height=0.15\textwidth]{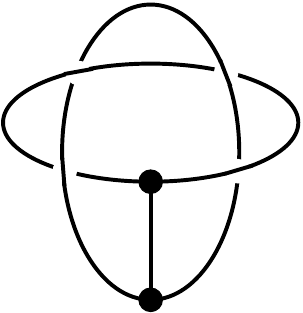}$\op{G4_1}$\\
	\vspace*{1.5em}
	\includegraphics[height=0.15\textwidth]{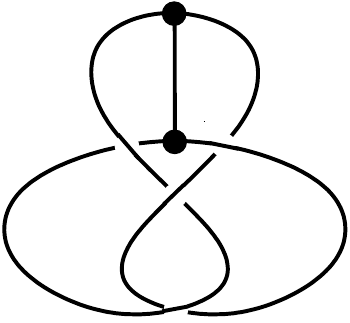}$\op{G4_2}$
	\includegraphics[height=0.15\textwidth]{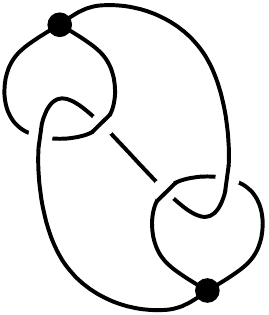}$\op{G4_3}$
\end{table}

\begin{itemize}
\item $\op{G2} \twosum \Kthree$: the handlebody-knot $5_4$ of \cite[Table 1]{IshKisMorSuz:12}; 
%(five crossings);
\item $\op{G3} \twosum \Kthree$: the handlebody-knots $6_{14}$ and $6_{15}$ of \cite[Table 1]{IshKisMorSuz:12}.
%, depending on
%the choice of orientation for the two (chiral) factors;
%(six crossings)
\item $\op{G2} \twosum \Kfour$: the handlebody-knot $6_{16}$ of \cite[Table 1]{IshKisMorSuz:12};
%(six crossings),
\item $\op{G2} \twosum \Kfiveone$: 
%(seven crossings), entry 
the handlebody-knot $\sevensixtyone$ of Table \ref{tab:table};
\item $\op{G2} \twosum \Kfivetwo$: 
%(seven crossings), entry 
the handlebody-knot $\sevensixtytwo$ of Table \ref{tab:table};
\item $\op{G3} \twosum \Kfour$: 
%(seven crossings), entry 
the handlebody-knot $\sevensixtythree$ of Table \ref{tab:table};
\item $\op{G4_1} \twosum \Kthree$: the handlebody-knots
% (seven crossings), entries 
$\sevensixtyfour$ and $\sevensixtyfive$ of Table \ref{tab:table}
%, depending on
%the choice of orientation for the two (chiral) factors;
\item $\op{G4_2} \twosum \Kthree$: the handlebody-knots 
%(seven crossings), entries 
$\sevensixtysix$ and $\sevensixtyseven$ of Table \ref{tab:table}.
%, depending on
%the choice of orientation for the two (chiral) factors;
\item $\op{G4_3} \twosum  \Kthree$: the handlebody-knots
%(seven crossings), entry 
$\sevensixtyeight$ and $\sevensixtynine$ of Table 
\ref{tab:table}.
%\item $\op{G4_2} \twosum_2 \Kthree$ (seven crossings), entry  ;
\end{itemize}
Each handlebody-knot above admits a canonical diagram with a single prime factor associated to a minimal diagram of $\op{Kn_m}$; see Table \ref{tab:table}.
The set of diagrams is denoted by $\mathcal{D}_2$ and contains $14$ entries, listed in \cite{supplement:26}, file
\texttt{D2.csv}.
%
% mp: explain the origin of the $14 diagrams
%
It is obtained as follows: first we construct all the embeddings (with no overpass information) matching the list above restricted to
$7$-crossings.
With the exception of $\op{G4_3}$ we must obstruct the IH move that would otherwise lead to nonminimality.
Since we lose, at this stage, the overcrossing information we can forget about the chirality of the factor, moreover due
to left-right simmetry we can also forget about the possible reversibility\footnote{We do not take advantage here of the actual
reversibility of the knots involved.} of the knot summand leading to only one (flattened) embedding.
On the contrary $\op{G4_3} \twosum  \Kthree$ contributes four distinct (flattened) embeddings, they are distinguished by the choice of
the arc in $\op{G4_3}$, two possibilities since we can exchange the two loops, then we have two choices for the glueing of the end-points
in the surgery (that could be reduced to one by enforcing the reversibility of $\Kthree$).
This set of $9$ embeddings will then produce the $14$ pairings with a choice of the overpass information after dropping trivially nonminimal
choices.

\subsection{Diagrams with connectivity $1$}\label{sec:completeness1}
The aim here is to find all seven crossing handlebody-knots that have a minimal diagram with connectivity $1$.

\subsection*{$1$-sum of handlebody-knots} 
Let $V_1,V_2$ be two handlebody-knots of genus $g_1,g_2$, respectively. Then the $1$-sum $V_1\onesum V_2$ is constructed as follows:
For each $i=1,2$, choose a $3$-ball $B_i$
such that $D_i:=V_i\cap B_i=\partial V_i\cap \partial B_i$ is a disk. 
Let $h:\partial \Compl{B_1}\rightarrow \partial \Compl{B_2}$ be an orientation-reversing homeomorphism with 
$h(D_1)=D_2$. 
Then $V_1\onesum V_2$ is the genus $g_1+g_2$ handlebody-knot obtained by gluing $D_1\subset V_1$ to $D_2\subset V_2$ via $h$ in the $3$-sphere $\Compl{B_1}\cup_h \Compl{B_2}$; see Fig.\ \ref{fig:one_sum}.

\begin{figure}[h]
	\centering
	\begin{overpic}[scale=.2,percent]{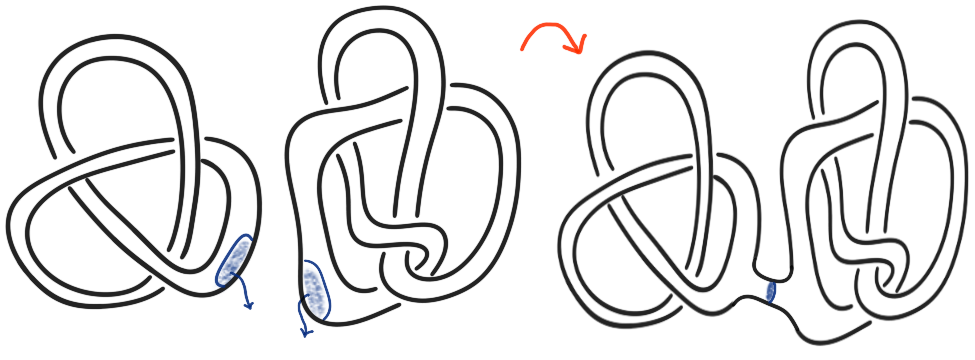}
		\put(29,-1){\footnotesize $D_2$}
		\put(23.2,1.8){\footnotesize $D_1$} 
	\end{overpic}
	\caption{One sum of $\op{K3_1}$ and $\op{K4_1}$.}
	\label{fig:one_sum}
\end{figure}

\subsection*{Enumeration}
Suppose the handlebody-knot $V$ has a minimal diagram $D$ with connectivity $1$, and let $e\in D$ be the edge cutting which disconnects $D$. 
Then $D-\{e\}$ consists of two components $D_1,D_2$.
Let $V_1,V_2$ be the genus one handlebody-knots represented by 
$D_1,D_2$. Then $D$ is a diagram of $V_1\onesum V_2$. 
Therefore, to enumerate handlebody-knots with a minimal diagram of connectivity $1$, it suffices to consider all one sums $V_1\onesum V_2$ with 
$V_1,V_2$ of genus one and $c(V_1)+c(V_2)=7$.
Since a genus one handlebody-knot is equivalent to a knot, we use the same notation $\op{Kn_m}$ to denote the genus one handlebody-knot corresponding to the prime knot $\op{Kn_m}$, and let $\Kzero$ be the trivial genus one handlebody-knot.

\begin{itemize}
	\item{} $\Ksevenone \onesum \Kzero$;
	\item{} $\Kseventwo \onesum \Kzero$;
	\item{} $\Kseventhree \onesum \Kzero$;
	\item{} $\Ksevenfour \onesum \Kzero$;
	\item{} $\Ksevenfive \onesum \Kzero$;
	\item{} $\Ksevensix \onesum \Kzero$;
	\item{} $\Ksevenseven \onesum \Kzero$;
	\item{} $\Kfour \onesum \Kthree$;
	\item{} $(\Kfour \twosum \Kthree) \onesum \Kzero$.
\end{itemize}
For each handlebody-knot above, there is a canonical diagram given by minimal diagrams of $\op{Kn_m}$ as shown in Table \ref{tab:tablered}; the set of diagrams is denoted by $\mathcal{D}_1$.

\subsection{Conclusion}
Let $\mathcal{D}=\mathcal{D}_1\cup\mathcal{D}_2\cup\mathcal{D}_3$. 
%be the set of diagrams obtained in Sections \ref{sec:completeness3}, \ref{sec:completeness2}, \ref{sec:completeness1}.

\begin{theorem}[Completeness]
For every seven crossing handlebody-knot $V$, 
there is a minimal diagram in $\mathcal{D}$ representing $V$. 
\end{theorem}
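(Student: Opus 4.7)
The plan is a case analysis on the edge-connectivity $e$ of a trivalent minimal diagram $D$ of $V$. By the discussion following Theorem~\ref{thm:IH_move_handlebody_knot} (around Fig.~\ref{fig:turn_trivalent}), such a diagram exists, and its underlying graph has two trivalent and seven quadrivalent vertices. The three edges at either trivalent vertex form a cut, so $e \in \{1,2,3\}$; the three subsections just completed dispatch each value.

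If $e=3$, then $D$ is $3$-connected, and the enumeration of Section~\ref{sec:completeness3}---which exhausts all $3$-connected plane graphs with the prescribed vertex degrees, all choices of over/under information, and the reductions of Figs.~\ref{fig:autononminimal}--\ref{fig:autopairs}---places a diagram of $V$ in $\mathcal{D}_3$ by construction. If $e=2$ and $V$ admits no minimal diagram of connectivity $1$, then the analysis of Section~\ref{sec:completeness2} produces a non-empty maximal system $\mathcal{N}$ of prime factors of $D$, forces $|\mathcal{N}|=1$ via the crossing count, and identifies the base graph $\Gamma_0$ as one of the five entries of Table~\ref{tab:graphs} with induced prime knot $K_1$ of at most five crossings; the finitely many resulting $2$-sums $\Gamma_0 \twosum K_1$ place $V$ in $\mathcal{D}_2$. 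In the remaining case---$e=1$, or $e=2$ with $V$ admitting some minimal diagram of connectivity $1$---cutting the bridge of that connectivity $1$ minimal diagram decomposes it into two diagrams of genus one handlebody-knots $V_1,V_2$ with $c(V_1)+c(V_2)=7$, so $V \simeq V_1 \onesum V_2$, and the explicit list in Section~\ref{sec:completeness1} locates $V$ in $\mathcal{D}_1$.

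The theorem is thus a bookkeeping of Sections~\ref{sec:completeness3}--\ref{sec:completeness1}, and the only real obstacle lies in the middle case: to justify invoking the $2$-sum structure, one must verify that the base graph $\Gamma_0$ arising from a maximal system $\mathcal{N}$ of prime factors is itself minimal and $3$-connected. This follows from the maximality of $\mathcal{N}$ together with the minimality of $D$: any further edge cut of size $2$ in $\Gamma_0$ would produce an additional prime factor contradicting maximality, and any nonminimality of $\Gamma_0$ would contradict minimality of $D$. With this verified, the three cases cover every seven crossing handlebody-knot.
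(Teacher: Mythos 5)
Your proof is correct and follows essentially the same route as the paper: the paper's own argument is exactly this bookkeeping, splitting into the case where $V$ admits a minimal diagram of connectivity $1$ (handled by Section~\ref{sec:completeness1}) and the case where it does not (handled by Sections~\ref{sec:completeness3} and~\ref{sec:completeness2}). Your additional observations---that the three edges at a trivalent vertex bound the edge-connectivity by $3$, and that the base graph $\Gamma_0$ is minimal and $3$-connected by maximality of $\mathcal{N}$ and minimality of $D$---are consistent with (and slightly more explicit than) what the paper leaves implicit in Section~\ref{sec:completeness2}.
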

\begin{proof} 
As shown in Section \ref{sec:completeness1}, if $V$ has a minimal diagram with connectivity $1$, then $\mathcal{D}_1$ contains a minimal diagram of $V$. 
Suppose $V$ has no minimal diagram with connectivity $1$. If $V$ has a minimal diagram with connectivity $2$, Section \ref{sec:completeness2} implies $\mathcal{D}_2$ contains a minimal diagram of $V$; otherwise 
$V$ has a minimal diagram in $\mathcal{D}_3$; see Section \ref{sec:completeness3}. 
\end{proof}

\begin{remark}
In general, it is not known whether the minimal diagram of a $1$-sum always has connectivity $1$.
Similarly, it is unclear whether a minimal diagram of a handlebody-knot having a spine that is a $2$-sum always has connectivity $2$.

The enumeration is, however, based on the connectivity of minimal diagrams, and not on the decomposition type of handlebody-knots, and therefore does not depend on the aforementioned open problems. For instance, if a $1$-sum has no minimal diagrams with connectivity $1$, then its minimal diagram(s) can be found in $\mathcal{D}_2\cup \mathcal{D}_3$. 
\end{remark}

 \begin{remark}
 	Recall that a spatial graph $\Gamma$ is \emph{prime} if, for any $3$-ball $B$ with $\partial B$ meeting $\Gamma$ transversally at less than or equal to $3$ points, the intersection $B\cap \Gamma$ is contained in a proper disk in $B$; see \cite{Mor:07b}, \cite{Mor:09a}. 
 	In particular, if $\Gamma$ has a diagram  with connectivity less than or equal to $3$, then $\Gamma$ cannot be prime. 
 	Tables of prime spatial handcuff graphs and $\theta$-graphs, up to seven crossings, are given in Moriuchi \cite{Mor:07b}, \cite{Mor:09a}. 
    Denote by $\mathcal{G}$ be the set of Moriuchi's spatial graphs, up to equivalence and mirror image; there are $134$ in total, and let $\mathcal{H}$ be the set of genus two handlebody-knots with up to seven crossings, up to equivalence and mirror image; there are $90$ in total. Consider the function \[\mathcal{N}:\mathcal{G}\rightarrow \mathcal{H}\] 	
	given by taking a regular neighborhood of a spatial graph (see Table \ref{tab:map_graphs}).
	Then a natural question arises as to what the image of $\mathcal{N}$ is. Applying the IH-move shows that among all spatial $\theta$-graph with seven crossings in \cite{Mor:09a}, from $7_{25}$ onward, all except $7_{40},7_{48},7_{58},7_{60}, 7_{64}$, are non-minimal as a diagram of a handlebody-knot.
 	Similarly, among all spatial handcuff graph with seven crossings in \cite{Mor:07b}, from $7_{18}$ onward, all except $7_{21}$, are non-minimal as a diagram of a handlebody-knot. On the other hand, those who are minimal, as a diagram of a handlebody-knot, often represent equivalent handlebody-knots, up to mirror image. More precisely, 
 	\begin{itemize}
	\item spatial $\theta$-graphs $7_{21},7_{60}$ in \cite{Mor:09a} and spatial handcuff graph $7_{21}$ in \cite{Mor:07b} represent the handlebody-knot $7_{36}$ in Table \ref{tab:table};
 	\item spatial $\theta$-graphs $7_{18},7_{48},7_{58}$ in \cite{Mor:09a} represent the handlebody-knot $7_{52}$ in Table \ref{tab:table};
 	\item spatial $\theta$-graph $7_{16}$ in \cite{Mor:09a} and spatial handcuff graph $7_{17}$ in \cite{Mor:07b} represent the handlebody-knot $7_{32}$ in Table \ref{tab:table}. 	
 	\end{itemize}
 	In summary, there are $27$ handlebody-knots of seven crossings in Table \ref{tab:table} that are not in the image of $\mathcal{N}$, namely, all handlebody-knots from $7_{40}$ onward except $7_{47},7_{52}$. Particularly, none of their seven-crossing spines is prime.
%	A precise correspondence can be found in \cite{bigtable:web}, 
%	following the link ``\textit{Map spatial graphs to handlebody-knots}''.

%Handlebody $7_{47}$ has the interesting property of being equivalent to a regular neighborhood of both a prime graph
%($\theta$-graph $7_{64}$ in \cite{Mor:09a}) and a nonprime graph (obtained by combining $\theta$-graph $5_2$ in \cite{Mor:09a}
%with handcuff graph $2_1$ in \cite{Mor:07b}).
%Similarly, $7_{52}$ is equivalent to a regular neighborhood of three prime graphs (as noted above) and two non-prime distinct spatial graphs.

 \end{remark}

\begin{table}[ht]
\caption{Map $\mathcal N$.
We omit graphs that are mapped to the trivial $\Kzero \onesum \Kzero$ handlebody knot.
The numbering for $\theta$-graphs (first table) and handcuff graphs (second table) refers to \cite[Fig. 1]{Mor:09a}
and \cite[Fig. 1]{Mor:07b} respectively.
The numbering for handlebodies (`h.body' columns) is taken from Table \ref{tab:table} and \cite[Table 1]{IshKisMorSuz:12}}.
\label{tab:map_graphs}
\begin{tabular}{ |c c|c c|c c| }
\hline
{\bf $\theta$-graph} & {\bf h.body} &
{\bf $\theta$-graph} & {\bf h.body} &
{\bf $\theta$-graph} & {\bf h.body}
 \\
\hline
$\op{5_1}$ & $5_3$ &
$7_{10}$ & $7_{26}$ &
$7_{35}$ & $5_{2}$
\\
$\op{5_2}$ & $5_2$ &
$7_{11}$ & $7_{27}$ &
$7_{39}$ & $5_{2}$
\\
$\op{6_1}$ & $6_8$ &
$7_{12}$ & $7_{28}$ &
$7_{40}$ & $7_{47}$
\\
$\op{6_2}$ & $6_9$ &
$7_{13}$ & $7_{29}$ &
$7_{41}$ & $6_{12}$
\\
$\op{6_3}$ & $6_{12}$ &
$7_{14}$ & $7_{30}$ &
$7_{45}$ & $6_{13}$
\\
$\op{6_4}$ & $6_{13}$ &
$7_{15}$ & $7_{31}$ &
$7_{47}$ & $6_{6}$
\\
$\op{6_7}$ & $4_1$ &
$7_{16}$ & $7_{32}$ &
$7_{48}$ & $7_{52}$
\\
$\op{6_{11}}$ & $5_1$ &
$7_{17}$ & $7_{33}$ &
$7_{51}$ & $6_{11}$
\\
$\op{6_{15}}$ & $5_1$ &
$7_{18}$ & $7_{52}$ &
$7_{52}$ & $6_{8}$
\\
$\op{7_1}$ & $7_{17}$ &
$7_{19}$ & $7_{34}$ &
$7_{54}$ & $6_{2}$
\\
$\op{7_2}$ & $7_{18}$ &
$7_{20}$ & $7_{35}$ &
$7_{55}$ & $6_{5 }$
\\
$\op{7_3}$ & $7_{19}$ &
$7_{21}$ & $7_{36}$ &
$7_{58}$ & $7_{52}$
\\
$\op{7_4}$ & $7_{20}$ &
$7_{22}$ & $7_{37}$ &
$7_{60}$ & $7_{36}$
\\
$\op{7_5}$ & $7_{21}$ &
$7_{23}$ & $7_{38}$ &
$7_{61}$ & $6_{9 }$
\\
$\op{7_6}$ & $7_{22}$ &
$7_{24}$ & $7_{39}$ &
$7_{62}$ & $5_{2 }$
\\
$\op{7_7}$ & $7_{23}$ &
$7_{26}$ & $6_{13}$ &
$7_{64}$ & $7_{47}$
\\
$\op{7_8}$ & $7_{24}$ &
$7_{30}$ & $6_{2}$ &
 &
\\
$\op{7_9}$ & $7_{25}$ &
$7_{31}$ & $5_{2}$ &
 & 
\\
\hline
\end{tabular}
%\end{table} 
%
%
%
%\begin{table}[ht]
%\caption{Map $\mathcal N$: handcuff graphs. We omit handcuff graphs that are mapped to
%the trivial $\Kzero \onesum \Kzero$ handlebody knot.}
%\label{tab:map_handcuff}
\begin{tabular}{ |c c|c c|c c| }
\hline
{\bf h. graph} & {\bf h.body} &
{\bf h. graph} & {\bf h.body} &
{\bf h. graph} & {\bf h.body}
 \\
\hline
% total 33 entries
$\op{6_1}$ & $6_7$ &
$\op{7_{6 }}$ & $7_{6 }$ &
$\op{7_{17}}$ & $7_{32}$
\\
$\op{6_{2 }}$ & $6_{4 }$ &
$\op{7_{7 }}$ & $7_{7 }$ &
$\op{7_{20}}$ & $5_{3 }$
\\
$\op{6_{3 }}$ & $6_{3 }$ &
$\op{7_{8 }}$ & $7_{8 }$ &
$\op{7_{21}}$ & $7_{36}$
\\
$\op{6_{4 }}$ & $6_{2 }$ &
$\op{7_{9 }}$ & $7_{9 }$ &
$\op{7_{24}}$ & $4_{1 }$
\\
$\op{6_{6 }}$ & $5_{2 }$ &
$\op{7_{10}}$ & $7_{10}$ &
$\op{7_{25}}$ & $6_{8 }$
\\
$\op{6_{9 }}$ & $5_{2 }$ &
$\op{7_{11}}$ & $7_{11}$ &
$\op{7_{27}}$ & $5_{3 }$
\\
$\op{7_{1 }}$ & $7_{1 }$ &
$\op{7_{12}}$ & $7_{12}$ &
$\op{7_{29}}$ & $6_{5 }$
\\
$\op{7_{2 }}$ & $7_{2 }$ &
$\op{7_{13}}$ & $7_{13}$ &
$\op{7_{31}}$ & $6_{12}$
\\
$\op{7_{3 }}$ & $7_{3 }$ &
$\op{7_{14}}$ & $7_{14}$ &
$\op{7_{32}}$ & $6_{6 }$
\\
$\op{7_{4 }}$ & $7_{4 }$ &
$\op{7_{15}}$ & $7_{15}$ &
$\op{7_{33}}$ & $5_{1 }$
\\
$\op{7_{5 }}$ & $7_{5 }$ &
$\op{7_{16}}$ & $7_{16}$ &
$\op{7_{34}}$ & $6_{9 }$
\\
\hline
\end{tabular}
\end{table}

%%%%%%%%%%%%%%%%%%%%%%%%%%%%%%%%%%%%%%%%%%%%%%%%%%%%%%%%%%%%%%%%%%%%%%%%%%%%%
%%%%%%%%%%%%%%%%%%%%%%%%%%%%%%%%%%%%%%%%%%%%%%%%%%%%%%%%%%%%%%%%%%%%%%%%%%%%%

%%%%%%%%%%%%%%%%%%%%%%%%%%%%%%%%%%%%%%%%%%%%%%%%%%%%%%%%%%%%%%%%%%%%%%%%%%%%%
 
%%%%%%%%%%%%%%%%%%%%%%%%%%%%%%%%%%%%%%%%%%%%%%%%%%%%%%%%%%%%%%%%%%%%%%%%%%%%%

%%%%%%%%%%%%%%%%%%%%%%%%%%%%%%%%%%%%%%%%%%%%%%%%%%%%%%%%%%%%%%%%%%%%%%%%%%%%%
\section{No duplicates in the table}\label{sec:uniqueness}
Our goal is to classify diagrams in $\mathcal{D}$ into several classes, each of which contains only diagrams that represent the same handlebody-knot, up to equivalence and mirror image. 
Given the sheer size of $\mathcal{D}$, 
we adopt a dynamic approach to classify diagrams in $\mathcal{D}$.  

\subsection{A dynamic approach}
\label{sec:bigtree}
The dynamic approach can be pictured as a growing rooted tree. The end result is a big rooted tree $\mathcal{T}$, which is a dynamic data structure with data attached to each leaf. 
Specifically, 
%the root of 
%$\mathcal{T}$ contains all diagrams in $\mathcal{D}$, 
%while 
each leaf of $\mathcal{T}$ contains diagrams that represent one or two equivalence classes of handlebody-knots, up to mirror image.

The tree $\mathcal{T}$ is grown from the
degenerate tree with a single vertex, the root, containing all diagrams in $\mathcal{D}$. 
At start, we apply some weak but computationally efficient invariant $t_1$ to handlebody-knots represented by diagrams in the root. The diagrams in $\mathcal{D}$ are then divided into classes so that each class contains only diagrams with the same $t_1$-invariant. We associate each class with a new leaf connecting to root with an edge, and thus the initial degenerate tree grows into a star.
% where each leaf contains diagrams with the same $t_1$-invariant. 
Next, for each leaf $v$ of the star, we apply a slightly stronger invariant $t_2$, specifically tailored to the leaf $v$.
Thus $t_2$ divides the diagrams in the leaf $v$ into even smaller classes based on their $t_2$-invariants.
In the same way, we associate each class with a new leaf connecting to $v$ with an edge; note that $v$ is no longer a leaf at this stage.  
The process repeats itself for each leaf, and whenever we arrive at a stage where the leaf contains only a handful of diagrams or no computationally affordable invariant can distinguish the handlebody-knots the diagrams therein represent, we tag
the leaf as {\it final}.
For each final leaf we need to manually check equivalence of all entries therein.  Entries with a higher number of crossings must
be manually checked to be nonminimal.
In a few cases, discussed below, we arrive to a partition of the diagrams 
into two sets of equivalent diagrams that must \draftGB{[maybe not completely 
clear to say first that the diagrams are equivalent and then that are inequivalent]}
then be
manually shown to be inequivalent (hard pairs).
The big tree $\mathcal{T}$ is then the resulting tree, where every leaf is tagged as final; see the web page \cite{bigtable:web} for a snapshot.

The invariants used here are the Kitano-Suzuki invariants. 
Given a handlebody-knot $V$, let $G$ be a finite group; \emph{the Kitano-Suzuki $G$-invariant $ks_G$} is the number of conjugate classes of homomorphisms from the fundamental group of the exterior $\Compl V$ of $V$ to $G$; see \cite{KitSuz:12}.
When the order of $G$ is small, the computation load of $ks_G$ is rather light, but it grows exponentially as the order of $G$ gets larger. 
%\draftMP{I added the following sentence...}
The computational complexity is also highly sensitive to the number of generators in the used presentation of the fundamental group. The finite groups used in our computation are: 
	\begin{itemize}
		\item{}$\operatorname{SL}_2(\Z_p)$ with prime $p$,
		\item{}The symmetric group $S_n$, $n \in {\mathbb N}$,
		\item{}The alternating group $A_n$, $n \in {\mathbb N}$.
	\end{itemize}
In the process of building the tree $\mathcal{T}$, we start with $G$ of a small size and move on to larger ones. 

The invariant $ks_G$ depends only on the exterior of a handlebody-knot, so it cannot differentiate inequivalent handlebody-knots with homeomorphic exteriors; such handlebody-knots, however, abound \cite{Mot:90}, \cite{LeeLee:12}, \cite{BePaWa:20a}. A stronger invariant, the {\it $G$-image} invariant introduced in \cite{BePaWa:20a} is thus used. 
The {\it $G$-image} invariant is constructed as follows: Consider the homomorphisms
\[ 
{\iota_1}_* :
	\pi_1(\partial V)\rightarrow \pi_1(V),\quad
	{\iota_2}_* :
	\pi_1(\partial V)\rightarrow \pi_1(\Compl V)
\]	
induced by the inclusions $\iota_1 : \partial V \to V$ and $\iota_2 : \partial V \to \Compl V$.
Then a surjective homomorphism from $\pi_1(\Compl V)$ to a finite group $G$ is said to be \textit{proper} if it becomes non-surjective after precomposed with ${i_2}_*$. The $G$-image is then given by the set of the images of the subgroup ${i_2}_\ast(\operatorname{Ker}({i_1}_\ast))$ in $G$ under proper homomorphisms.
%As an example we can distinguish among $\sevennine$, $\seventhirtyeight$, $\seventhirtyfive$, see \cite{bigtable:web}.

The computation of the invariants listed above are automated by  
a software developed by the third author.
Once we obtain the tree $\mathcal{T}$, 
we check semi-manually, via Theorem \ref{thm:IH_move_handlebody_knot}, 
the equivalence of diagrams in each leaf.
Most of them are shown to contain only one equivalence class of handlebody-knots, up to mirror image.
Details of such equivalence proofs are contained in the supplementary material \cite{supplement:26} where we include both visual proofs via handwritten drawings and
certifying files that
list the sequence of Reidemeister plus IH moves that connect pairs of diagrams,
obtained with the aid of GPT-5.5.

As an exception, there are seven leaves that are shown to contain two equivalence classes.
These pairs are
$(5_1,\sevenfiftytwo)$,  
$(\sevenforty, \sevenfortyone)$, $(\sevenfortythree, \sevenfortyfour)$, $(\sevenfiftynine$, $\sevensixty)$, $(6_{12},\seventhirtynine)$, $(\sevensixtyfour,\sevensixtyfive)$, $(\sevensixtysix, \sevensixtyseven)$, each leaf containing handlebody-knots that have the same values of all computed invariants.
Here $5_1,6_{12}$ are taken from the table \cite{IshKisMorSuz:12}.
Other methods are needed to distinguish them.
The situation with the last two pairs $(\sevensixtyfour, \sevensixtyfive)$, $(\sevensixtysix, \sevensixtyseven)$ is relatively simple since handlebody-knots in each of them have spines that are $2$-sums with a right and left trefoil knots. By the uniqueness of knotted handle decomposition in \cite[Theorem $2.2$]{IshKisOza:15},
if $\sevensixtyfour, \sevensixtyfive$ 
(resp.\ $\sevensixtysix, \sevensixtyseven$) 
were equivalent, up to mirror image, then an orientation-reversing self-homeomorphism of $\sphere$ would exist preserving 
a tunnel of a link $L$, where $L$ is the four-crossing link L$4a1$ (resp.\ the figure eight K$4_1$), contradicting
\cite[Theorem $16.2$]{ChoMcC:09}; see Fig.\ \ref{fig:L4a1} (resp.\ Fig.\ \ref{fig:K41}).
\begin{figure}[h]
	\begin{subfigure}{.45\linewidth}
		\centering
		\begin{overpic}[scale=.12,percent]{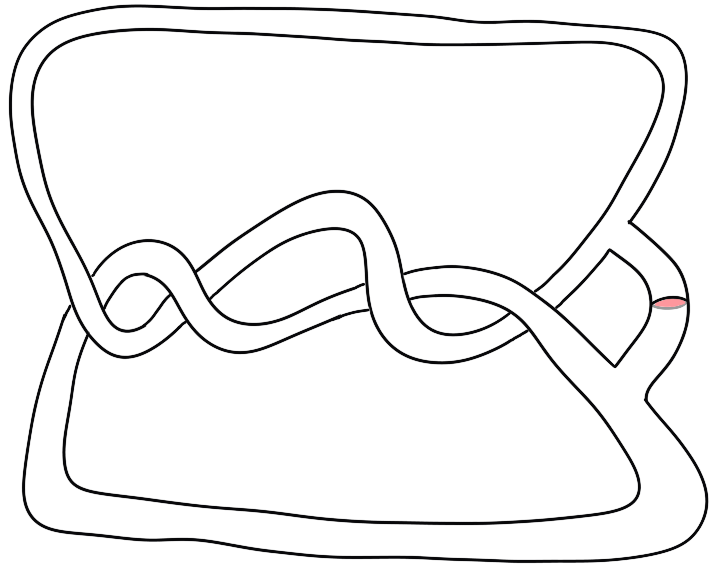} 
			\put(97,32){\small $D$}
		\end{overpic}
		\caption{L$4a1$ with the tunnel dual to $D$.} 
		\label{fig:L4a1}
	\end{subfigure}
	\begin{subfigure}{.45\linewidth}
		\centering
		\begin{overpic}[scale=.11,percent]{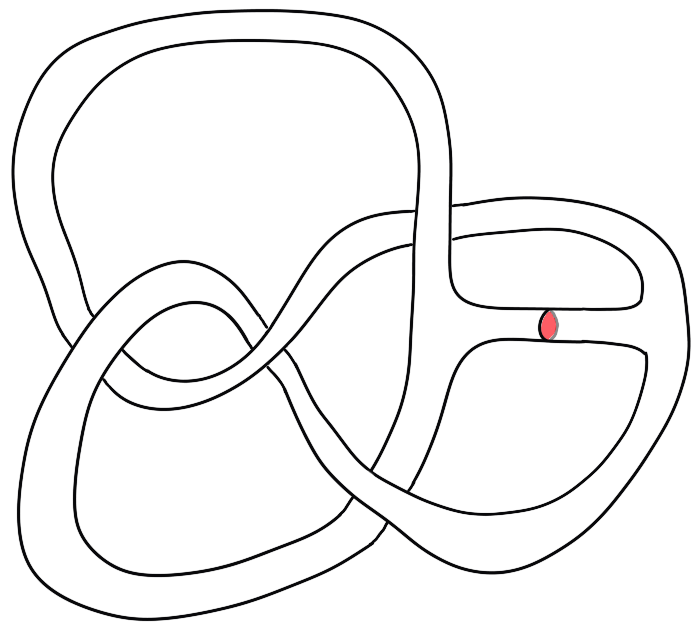} 
			\put(75,31){\small $D$}	
		\end{overpic}
		\caption{K$4_1$ with a tunnel dual to $D$.} 
		\label{fig:K41} 
	\end{subfigure}
	\caption{Neighborhood of the union of $L$ and its tunnel.}
\end{figure} 
On the other hand, \cite[Theorem $1.3$]{BePaPaWa:25} classifies handlebody-knots obtained from tangle replacement on the spatial graph $\mathrm{G}4_1$ in Fig.\ \ref{tab:graphs} (\cite{BePaPaWa:25} uses $\mathbf{h4_1}$ to denote $\mathrm{G}4_1$), and proves $6_{12}, \seventhirtynine$, $\sevenfiftynine, \sevensixty$ are mutually inequivalent, up to mirror image; see \cite[Corollary $1.4$]{BePaPaWa:25} for the statement and Figs.\ \ref{fig:replacement_Gfourone} for an illustration.
% of tangle replacement on $\mathrm{G}4_1$. 
The inequivalence, up to mirror image, for members in the other three pairs: $(\sevenforty, \sevenfortyone)$, $(5_1,\sevenfiftytwo)$, and $(\sevenfortythree, \sevenfortyfour)$,
is proved in Section \ref{sec:hardpairs}.   
The end result is the union of Tables \ref{tab:table} and \ref{tab:tablered}.
\footnote{The authors are informed that the pairs $(7_{40},7_{41})$, $(7_{43},7_{44})$, $(7_{59},7_{60})$ can be differentiated by quandle invariants, and Ishii-Kishimoto have independently obtained a similar table with $69$ entries, where they find $3$ hard pairs.} 
\begin{figure}[h]
	\centering
\includegraphics[scale=.1]{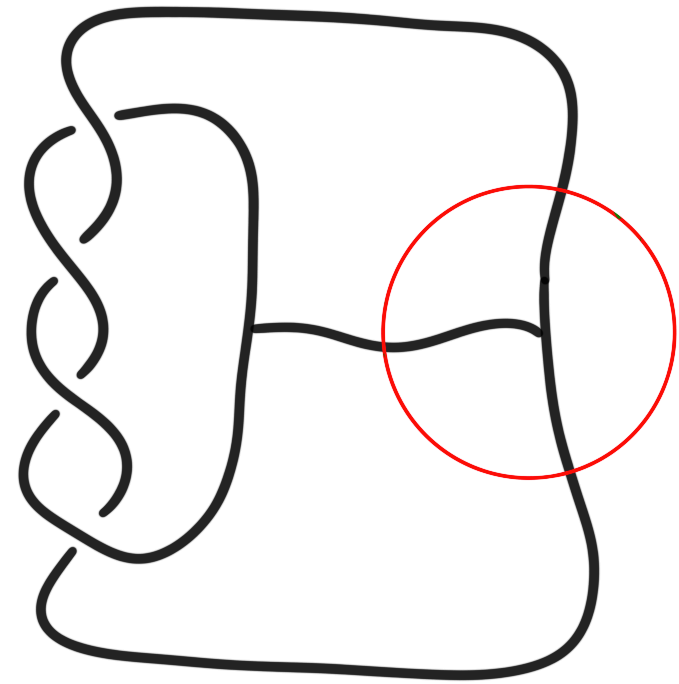}
$\mathrm{G}4_1$
\quad
\includegraphics[scale=.1]{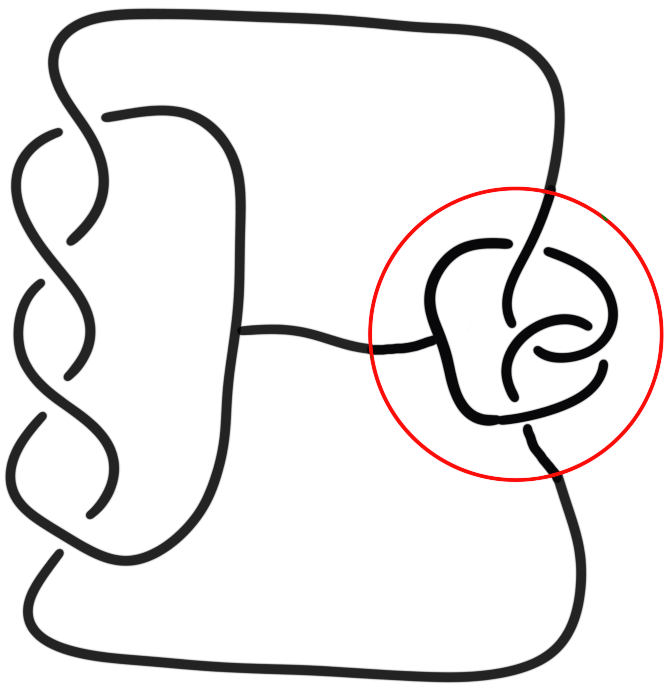}
$6_{12}$ 
\quad 
\includegraphics[scale=.1]{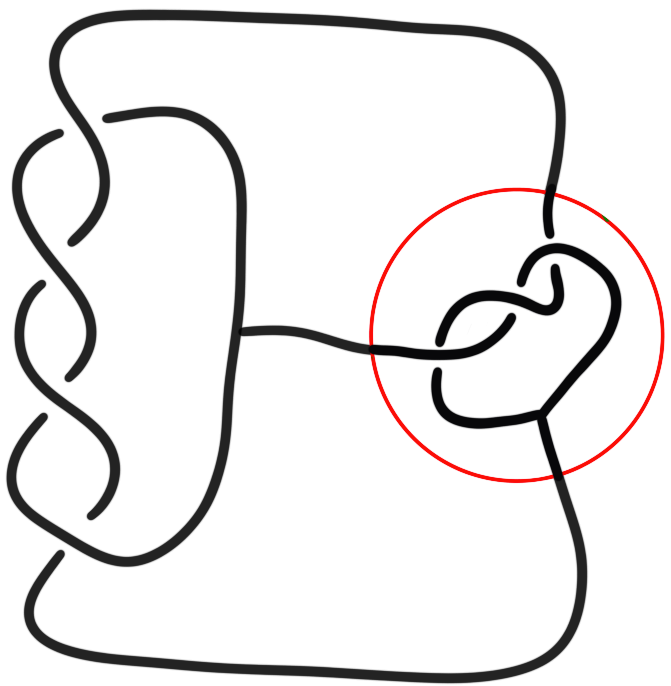}
$7_{59}$ 
\caption{Tangle replacement on $\mathrm{G}4_1$.}
	\label{fig:replacement_Gfourone}
\end{figure}
%

%\draftMP{Paragraphs added to address the major issue n.2 by the referee.}

The choice of the invariant for each leaf that is not yet considered final is not automated and was manually chosen by the authors.
For the initial degenerate tree we selected the Kitano-Suzuki $G$-invariant with $G = \operatorname{SL}_2(\Z_5)$, which resulted
in $54$ distinct values and hence $54$ new leaves, containing equivalence
classes of embeddings with the same value of this invariant.
Of these, $27$ were considered final.
For $24$ of these we were able to show mutual equivalence of all handlebodies represented by the diagrams in each by using
the Reidemeister moves I through V (Figures \ref{fig:reidemeister_a} 
and \ref{fig:reidemeister_b}) and the IH-move
(Fig. \ref{fig:IH-move}), thus obtaining various entries in Tables \ref{tab:table} and \ref{tab:tablered}.

The value $528$ of the mentioned Kitano-Suzuki invariant 
leads to the hard-pair $(6_{12},\seventhirtynine)$, whereas
the values $684$ and $724$ lead to the two hard-pairs $(\sevenfortythree, \sevenfortyfour)$ and $(\sevenforty, \sevenfortyone)$.

Four leaves of the remaining $27$ only contain nonminimal diagrams.  They are manually shown to be nonminimal, since they did not
contain any of the portions listed in Figures \ref{fig:autononminimal}-\ref{fig:autopairs}.

The remaining $23$ leaves of the growing tree $\mathcal T$ after the first invariant computation require stronger invariants in order to
distinguish inequivalent diagrams.
We made here various choices of the invariants: Kitano-Suzuki with $G = S_6$, $\operatorname{SL}_2(\Z_7)$, {\it $G$-image} invariant with $G = A_5$ and
$G = S_6$, the specific choices being indicated in \cite{bigtable:web}.

To further illustrate the dynamic procedure we list here the sequence of invariants leading to entry $7_{10}$ in
Table \ref{tab:table}.
The first invariant computed is $ks_G$, with $G = \operatorname{SL}_2(\Z_5)$ that results in a value of $408$ for a large
number of diagrams.
We further compute on these diagrams the (presumably stronger) invariant $ks_G$ with $G = S_6$ obtaining six distinct values,
hence partitioning the already selected diagrams into six distinct invariant-equivalence%
\footnote{In this case {\it invariant-equivalent} means diagrams that share the value of both $ks_G$ invariants
with $G = \operatorname{SL}_2(\Z_5)$ and $G = S_6$.
Note that there is no guarantee that all diagrams in an equivalence class are equivalent as handlebody-knots.}
classes.

Four of these classes happen to contain mutually equivalent diagrams (shown equivalent by using Reidemeister- and IH-moves).
We found entries $5_3$ and $6_8$ in Table 1 of \cite{IshKisMorSuz:12} and entries $7_4$, $7_{14}$ in our Table \ref{tab:table}.
One class contains entries that could be grouped into two sets of mutually equivalent diagrams that were shown to be inequivalent
in \cite{BePaPaWa:25} and named $7_{59}$, $7_{60}$ in Table \ref{tab:table}.
Finally the class identified by the invariant value of $1272$ required the computation of an even stronger invariant.
We chose to compute the {\it $G$-image} with $G = A_5$, able to separate the diagrams into two classes, each with mutually
equivalent diagrams.
One class turns out to correspond to entry $6_2$ in \cite{IshKisMorSuz:12}, and the other corresponds to entry $7_{10}$
in Table \ref{tab:table}.
We refer to \cite{bigtable:web} for the complete picture.
\draftMP{Mail from Ishii: Our table also contains 69 handlebody-knots. Interestingly, the pairs
$(7_{59},7_{60})$, $(7_{43},7_{44})$, $(7_{40},7_{41})$ are easily distinguishable using
invariants derived from G-family of quandles, so they did not appear as hard
pairs in our study.}
%

%%%%%%%%%%%%%%%%%%%%%%%%%%%%%%%%%%%%%%%%%%%%%%%%%%%%%%%%%%%%%%%%%%%%%%%%%%%%%

%%%%%%%%%%%%%%%%%%%%%%%%%%%%%%%%%%%%%%%%%%%%%%%%%%%%%%%%%%%%%%%%%%%%%%%%%%%%%
\section{Irreducibility}\label{sec:irreducibility}
To divide the handlebody-knots into Tables \ref{tab:table} and \ref{tab:tablered}, we need to   
determine their irreducibility. 
Recall that the \emph{rank} of a handlebody-knot $V$ is the minimal number of generators needed to generate the fundamental group $\pi_1(\Compl V)$. 
We use two ways to compute the presentation of $\pi_1(\Compl V)$: one uses a CW-complex structure of $\Compl V$, while the other is the Wirtinger presentation, and then simplify the presentation as much as possible with the software developed by the third author \cite{appcontour}. As a result, we find all except $7_{68},7_{69}$ in Table  \ref{tab:table} are of rank $3$. The ranks of $7_{68},7_{69}$ are less than or equal to $4$. This allows us to apply the irreducibility test in \cite[Corollary $1.3$]{BePaWa:20b}. 

The Kitano-Suzuki invariant $ks_G$ used in the following Theorem is defined in Section \ref{sec:bigtree}. 

\begin{theorem}\label{thm:irreducibility} 
Given a handlebody-knot $V$ with a rank of $r$ and its $ks_{A_5}$-invariant equal to $k_5$ and 
$ks_{A_4}$-invariant equal to $k_4$, if   
\begin{enumerate}[label=(\roman*)]
\item\label{itm:three_gen} $r\leq 3$, and $k_5$ is not congruent to $17$ modulo $60$, or  
\item\label{itm:four_gen} $r\leq 4$, and $k_4$ is not congruent to $10$ modulo $12$,
\end{enumerate}
then $V$ is irreducible.
\end{theorem} 
\begin{proof}	
By \cite[Corollary $1.3$]{BePaWa:20b},
if $r\leq 3$ and $k_5+14\cdot 4+ 19\cdot 3+ 22 \cdot 5=k_5+223$ is not congruent to $0$ modulo $60$, then $V$ is irreducible. This implies the first assertion. Again by \cite[Corollary $1.3$]{BePaWa:20b}, if $r\leq 4$ and 
$k_4+(6+16k)\cdot 3+(2+6k)\cdot 4$ is not congruent to $0$ modulo $12+24k$, $k=0,1$, then $V$ is irreducible. In other words, if $k_4+26$ and $k_4+98$ are, respectively, not congruent to $0$ modulo $12$ and $36$, then $V$ is irreducible. Note that the former implies the latter and hence the second assertion.
\end{proof}

\begin{corollary}
All handlebody-knots in Table \ref{tab:table} are irreducible. 
\end{corollary}
\begin{proof}
The irreducibility of $7_{68},7_{69}$ is detected by Theorem \ref{thm:irreducibility}\ref{itm:four_gen}, and for all the other handlebody-knots except for $7_{49}$, their irreducibility is detected by Theorem \ref{thm:irreducibility}\ref{itm:three_gen}.   
For the handlebody-knot $7_{49}$, its $ks_{A_5}$-invariant is $77$, and hence the test in Theorem \ref{thm:irreducibility} fails. We observe, however, that $7_{49}$ can be obtained by performing a tunnel looping on a tunnel of the $(5,2)$-torus knot; see Fig.\ \ref{fig:sevenfourtynine_as_looping}. Thus, its irreducibility follows from \cite[Lemma $6.3$; Lemma $5.1$]{Wan:24}. 
\end{proof} 

\begin{figure}
\begin{subfigure}{.5\linewidth}
\centering
\begin{overpic}[scale=.15,percent]{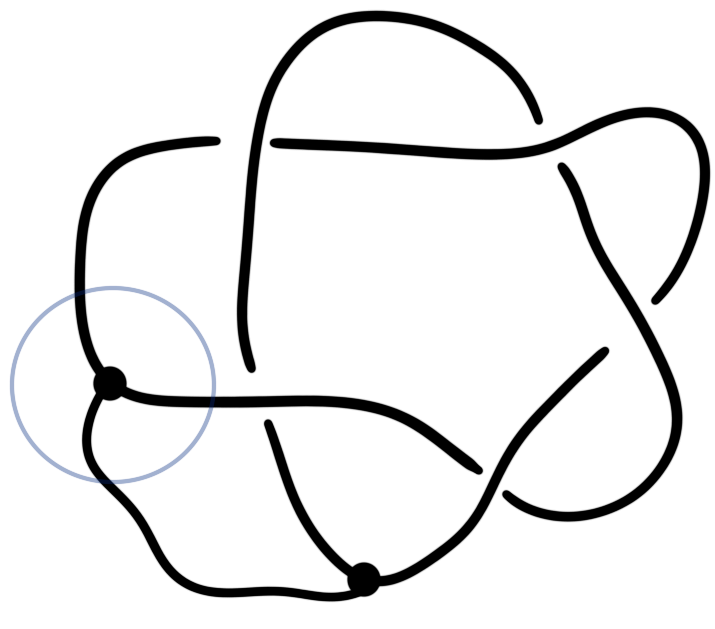} 
	\put(30,75){$K$}
	\put(15,8){$\tau$}
\end{overpic}
\caption{$(5,2)$-torus knot $K$ with tunnel $\tau$.}
\label{}
\end{subfigure}
\begin{subfigure}{.47\linewidth}
\centering
\begin{overpic}[scale=.15,percent]{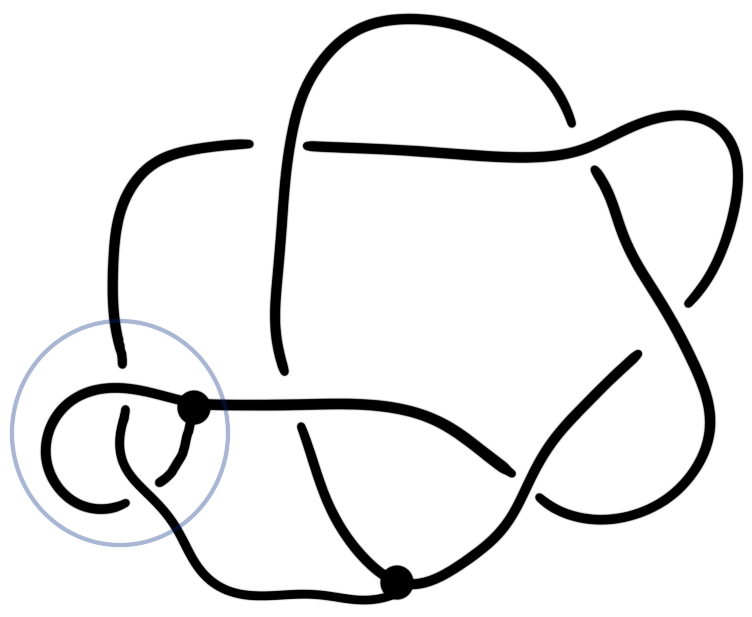}
\end{overpic}
	\caption{$7_{49}$.}
	\label{fig:looping}
\end{subfigure}
\caption{$7_{49}$ as obtained from tunnel looping.}
\label{fig:sevenfourtynine_as_looping} 
\end{figure}

The value of the $ks_{A_5}$- and $ks_{A_4}$-invariants for all handlebodies in Table \ref{tab:table} is included in
the supplementary material \cite{supplement:26}.

%%%%%%%%%%%%%%%%%%%%%%%%%%%%%%%%%%%%%%%%%%%%%%%%%%%%%%%%%%%%%%%%%%%%%%%%%%%%%

%%%%%%%%%%%%%%%%%%%%%%%%%%%%%%%%%%%%%%%%%%%%%%%%%%%%%%%%%%%%%%%%%%%%%%%%%%%%%
%\ifdraft{
\section{Hard pairs}\label{sec:hardpairs}
Here we prove the inequivalence of the two handlebody-knots in each of the pairs:
%Recall from Section \ref{sec:uniqueness}, 
%there are five pairs: 
$(5_1,\sevenfiftytwo)$, %$(6_{12},\seventhirtynine)$, 
$(\sevenforty,\sevenfortyone)$,
$(\sevenfortythree, \sevenfortyfour)$. 
Recall from \cite{IshKisOza:15}, a $3$-decomposing sphere of a handlebody-knot $V$ is a $2$-sphere $S$ meeting $V$ at three disks such that $S\cap \Compl V$ is incompressible in $\Compl V$. 
If in addition, the three disks in $S\cap V$ are parallel in $V$,
then $S$ is called a $P_3$-sphere of $V$; see \cite{BelPaoWan:24}.  
A maximal $P_3$-system $\mathcal{S}$ is a set $\{S_1,\dots, S_n\}$ of mutually disjoint $P_3$-spheres such that the intersections $S_i\cap \Compl V$, $i=1,\dots, n$, 
are mutually non-parallel in $\Compl V$. By \cite[Theorem $1.1$]{BelPaoWan:24}, 
if $V$ admits two maximal $P_3$-systems $\mathcal{S},\mathcal{S}'$, then there exists a self-homeomorphism of $\pair$ sending 
$\mathcal{S}$ to $\mathcal{S}'$.

\begin{figure}[t]
	\begin{subfigure}{.48\linewidth}
		\centering
		\begin{overpic}[scale=.15,percent]{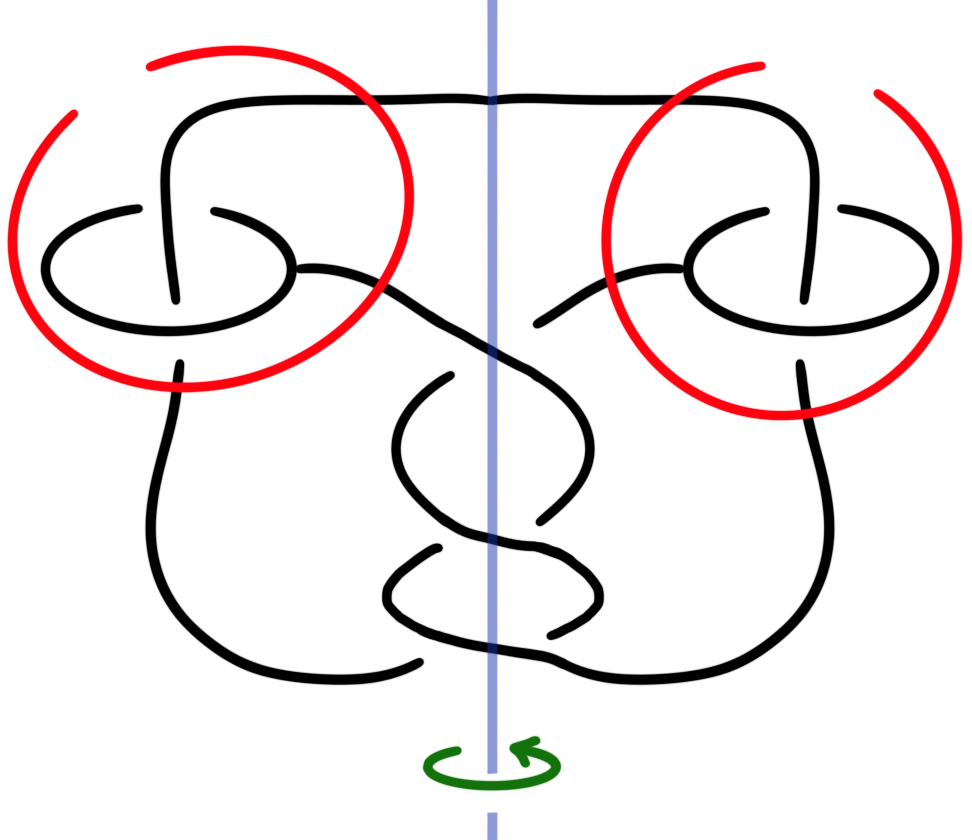}
			\put(7.5,76){$S_1$}
			\put(81,77){$S_2$}
			\put(60,5){$\pi$}
			\put(30,67){$X$}
			\put(53,64){$Y$}
			\put(70,68){$Z$}
			\put(45,78){$a$}
			\put(39,49){$b$}
			\put(18,30){$c$}
		\end{overpic}
		\caption{$P_3$-surfaces of $\sevenfortythree$.}
		\label{fig:three_decomp:fortythree}
	\end{subfigure} 
	\begin{subfigure}{.48\linewidth}
		\centering
		\begin{overpic}[scale=.15,percent]{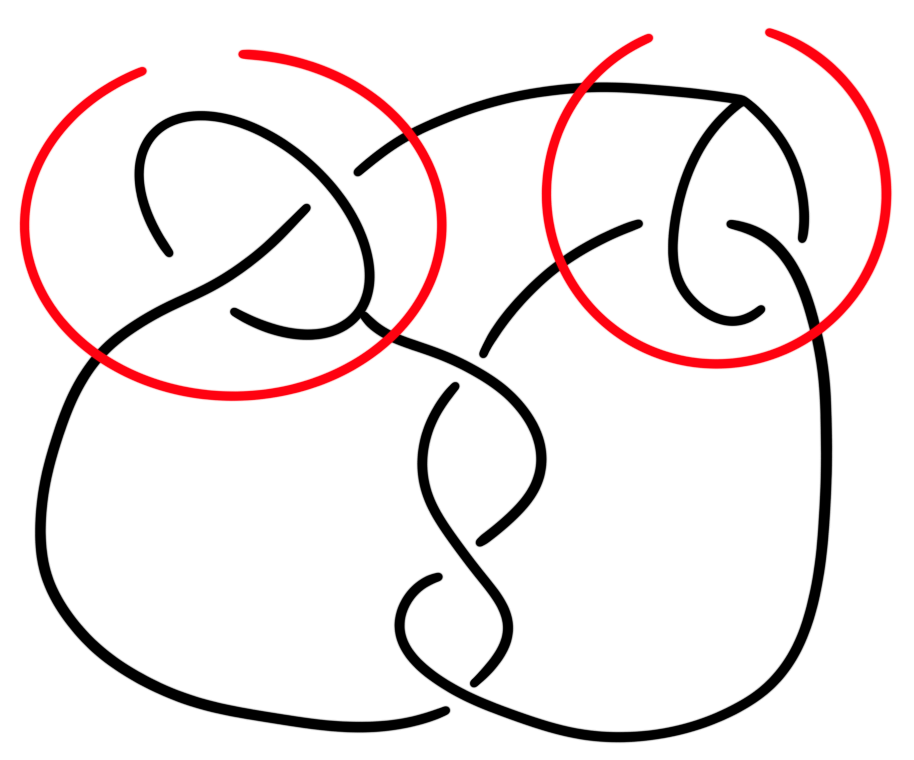}
			\put(18,75){$S_1'$}
			\put(73,77){$S_2'$}
			\put(21,60){$X'$}
			\put(51,58){$Y'$}
			\put(68,65){$Z'$}
			\put(50,74){$a'$}
			\put(49,47){$b'$}
			\put(8,28){$c'$}
		\end{overpic}
		\caption{$P_3$-surfaces of $\sevenfortyfour$.}
		\label{fig:three_decomp:fortyfour}
	\end{subfigure} 
	\nada{
		\begin{subfigure}{.48\linewidth}
			\centering
			\begin{overpic}[scale=.12,percent]{sg_twoone}
			\end{overpic}
			\caption{Spatial graphs $\Gamma_X,\Gamma_Z,\Gamma_{X'},\Gamma_{Z'}$.}
			\label{fig:sg_gamma_XZ}
		\end{subfigure} 
		\begin{subfigure}{.48\linewidth}
			\centering
			\begin{overpic}[scale=.12,percent]{sg_threeone}
				%\put(40,83){$a$}
				%\put(7,50){$b$}
				%\put(26,74){$c$}
			\end{overpic}
			\caption{Spatial graphs $\Gamma_Y,\Gamma_{Y'}$.}
			\label{fig:sg_gamma_Y}
		\end{subfigure}
	}
	\caption{}
\end{figure}

\begin{theorem} 
	$\sevenfortythree$, $\sevenfortyfour$ are inequivalent.
\end{theorem}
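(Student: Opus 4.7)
The plan is to exploit the uniqueness of maximal $P_3$-systems from \cite[Theorem 1.1]{BelPaoWan:24}: if the two handlebody-knots $V:=\sevenfortythree$ and $V':=\sevenfortyfour$ were equivalent (up to mirror image), then an equivalence could be isotoped to carry any maximal $P_3$-system of $V$ onto any maximal $P_3$-system of $V'$, and therefore the decompositions of $\pair$ and $\pairprime$ into ball--tangle pairs along such systems would coincide as unordered collections, up to orientation-preserving (or reversing) ball-homeomorphisms.

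The first step is to verify that the pair $\mathcal{S}=\{S_1,S_2\}$ depicted in Figure \ref{fig:three_decomp:fortythree} is a maximal $P_3$-system of $V$. This requires checking that each $S_i$ intersects $V$ in three disks that are parallel in $V$, that each $S_i\cap \Compl{V}$ is incompressible in $\Compl{V}$ (via an innermost-disk argument together with the irreducibility of $V$ granted by Theorem \ref{thm:irreducibility}), that $S_1\cap\Compl{V}$ and $S_2\cap\Compl{V}$ are mutually non-parallel in $\Compl{V}$, and that no further disjoint $P_3$-sphere non-parallel to $S_1$ or $S_2$ can be added. The analogous verification is then carried out for $\mathcal{S}'=\{S_1',S_2'\}$ in $V'$, using Figure \ref{fig:three_decomp:fortyfour}.

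Cutting $\pair$ along $S_1\cup S_2$ produces three ball--tangle pairs $(X,V\cap X)$, $(Y,V\cap Y)$, $(Z,V\cap Z)$, with the tangle strands labeled $a,b,c$ running across the cutting spheres; similarly $\pairprime$ decomposes into $(X',V'\cap X')$, $(Y',V'\cap Y')$, $(Z',V'\cap Z')$. By the uniqueness theorem invoked above, if $V$ and $V'$ were equivalent the two unordered triples of ball--tangles would themselves be equivalent. The task thus reduces to distinguishing these triples; from the figures one expects two of the three pieces in each decomposition to be trivial $3$-strand tangles and the remaining one (namely $Y$ and $Y'$) to be essential. These essential pieces can be told apart by computing the fundamental group of the complement of the arcs in the ball, or by comparing a standard tangle invariant of a canonical rational closure (for instance, the Alexander polynomial or the determinant of the resulting knot or link).

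The main obstacle lies in Step~1, specifically in the maximality check: while the $P_3$-property of each $S_i$ is readable from the diagram, excluding additional non-parallel $P_3$-spheres requires a careful analysis of incompressible planar surfaces with three boundary components inside the pieces of $\Compl{V}$ cut open by $S_1\cup S_2$, which, in a genus two handlebody-knot complement, is finite but delicate. Once maximality is secured, the comparison of the essential tangles in the final step is a concrete, essentially combinatorial computation.
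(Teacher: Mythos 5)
Your overall strategy---cut along a maximal $P_3$-system and invoke the uniqueness theorem \cite[Theorem $1.1$]{BelPaoWan:24}---is exactly the paper's, but the final step of your plan fails as written, and you have misplaced the difficulty. The decisive fact is that the three pieces of the decomposition of $\sevenfortythree$ are \emph{pairwise equivalent} to the three pieces of the decomposition of $\sevenfortyfour$: the two outer pieces induce the prime handcuff graph $\op{G2}$ in both cases, and the middle pieces $Y$ and $Y'$ both induce the mirror image of the prime $\theta$-curve $\op{G3}$ of Table \ref{tab:graphs}. Consequently no invariant of the individual pieces (fundamental group of the arc complement, Alexander polynomial or determinant of a closure, etc.), nor of the unordered collection of pieces, can distinguish the two handlebody-knots; your expectation that two pieces are trivial tangles and that $Y$, $Y'$ are distinguishable ``essential'' ones is wrong on both counts. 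What actually separates $\sevenfortythree$ from $\sevenfortyfour$ is the \emph{gluing data}: after normalizing a hypothetical equivalence $f$ so that $f(S_i)=S_i'$ (which uses the existence of a self-homeomorphism of $(\mathbb S^3,\sevenfortythree)$ swapping $S_1$ and $S_2$---a point you would also need to record), the induced equivalence of the middle $\theta$-curves is forced, by which endpoints lie on which sphere, to match arcs by $a\mapsto c'$, $b\mapsto b'$, $c\mapsto a'$; this is impossible because the constituent knot $b\cup c$ is a trefoil whereas $b'\cup a'$ is trivial. Any repair of your last step must therefore use an invariant of the middle tangle \emph{together with its boundary marking}, not of the tangle alone.

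Conversely, the step you flag as the main obstacle---maximality of $\{S_1,S_2\}$---is disposed of much more cheaply than by the analysis of incompressible planar surfaces you envisage: once each piece is seen to induce a \emph{prime} spatial graph ($\op{G2}$ or the mirror of $\op{G3}$), no further non-parallel $P_3$-sphere can be added, since such a sphere would lie in one of the pieces and meet the corresponding graph in at most three points, violating primeness.
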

\begin{proof}
	Denote by $V$ (resp.\ by $V'$) the handlebody-knot $\sevenfortythree$ (resp.\  $\sevenfortyfour$), and note that the spheres $S_1,S_2$ (resp.\ $S_1',S_2'$) in Fig.\ \ref{fig:three_decomp:fortythree} (resp.\ Fig.\ \ref{fig:three_decomp:fortyfour}) both are $P_3$-spheres. 
	Observe that the union $S_1\cup S_2$ (resp.\ $S_1'\cup S_2'$) cuts $\sphere$ into three components $X,Y,Z$ (resp.\ $X',Y',Z'$) as shown in Fig.\ \ref{fig:three_decomp:fortythree}, (resp.\ Fig.\ \ref{fig:three_decomp:fortyfour}). 
	
	Consider the unions $\Compl X\cup V$, $\Compl Y\cup V$, and $\Compl Z\cup V$ (resp.\ $\Compl {X'}\cup V',\Compl {Y'}\cup V'$, and $\Compl {Z'}\cup V'$), and observe that they are all genus two handlebodies. Additionally, the disks in the intersection $V\cap (S_1\cup S_2)$ (resp.\ $V'\cap (S_1'\cup S_2')$) induce a spine for each of them, and hence induce three spatial graphs $\Gamma_X,\Gamma_Y,\Gamma_Z$ (resp.\ $\Gamma_{X'},\Gamma_{Y'},\Gamma_{Z'}$). 
	
	The spatial graphs $\Gamma_X, \Gamma_Z, \Gamma_{X'}$, $\Gamma_{Z'}$
	are the spatial handcuff graph $\op{G2}$ in Table \ref{tab:graphs}, namely $2_1$ in \cite{Mor:07b}, while 
	$\Gamma_Y,\Gamma_{Y'}$ are the mirror image of  
	$\op{G3}$ in Table \ref{tab:graphs}, namely 
	$3_1$ in \cite{Mor:09a}, so they are all prime. This implies that $\{S_1,S_2\}$ (resp.\  $\{S_1',S_2'\}$) is a maximal $P_3$-system.
	
	Label the arcs in $\Gamma_Y$ and $\Gamma_{Y'}$ by $a,b,c$ and $a',b',c'$, respectively; see Figs.\ \ref{fig:three_decomp:fortythree}, \ref{fig:three_decomp:fortyfour}.
	Then, by the uniqueness of the maximal $P_3$-system \cite[Theorem $1.1$]{BelPaoWan:24} 
	and the fact that 
	there is a self-homeomorphism of $\pair$ swapping $S_1,S_2$ (see Fig.\ \ref{fig:three_decomp:fortythree}), there is a self-homeomorphism $f$ of $\pair$ with $f(S_i)=S_i'$, $i=1,2$. 
	In particular, $f$ induces an equivalence $g$ from 
	$\Gamma_Y$ to $\Gamma_{Y'}$ with $g(a)=c'$, $g(b)=b'$, and $g(c)=a'$. This contradicts the fact that the constituent knot $b\cup c$ of $\Gamma_Y$ is a trefoil knot, whereas the constituent knot $b'\cup a'$ of $\Gamma_{Y'}$ is trivial; 
	see $\op{G3}$ in Table \ref{tab:graphs}.   
\end{proof}

\begin{figure}[t]
	\begin{subfigure}{.33\linewidth}
		\centering
		\begin{overpic}[scale=.12,percent]{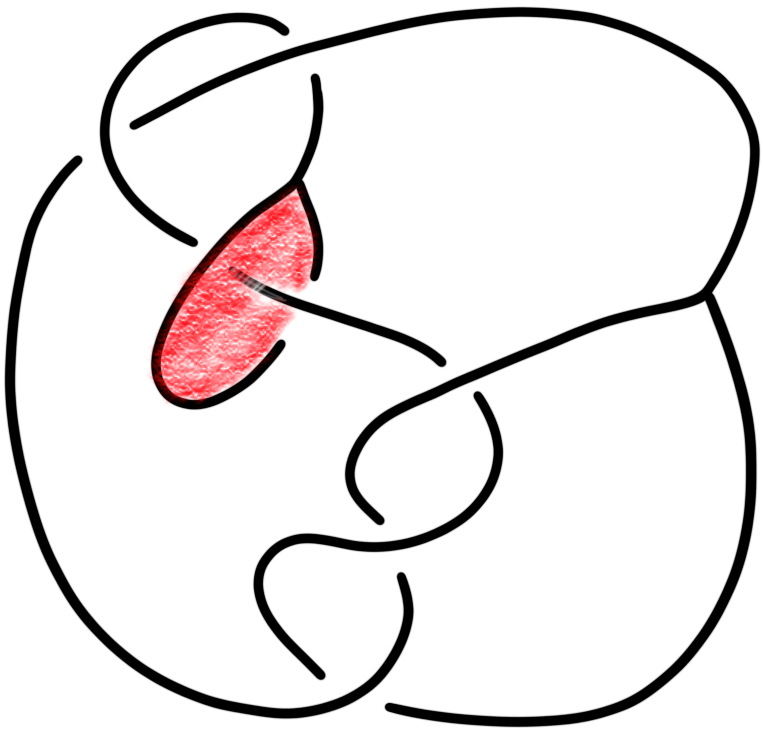}
			\put(24,50){$A$}
		\end{overpic}
		\caption{$\sevenforty$ and annulus $A$.}
		\label{fig:annulus_forty}
	\end{subfigure} 
	\begin{subfigure}{.33\linewidth}
		\centering
		\begin{overpic}[scale=.12,percent]{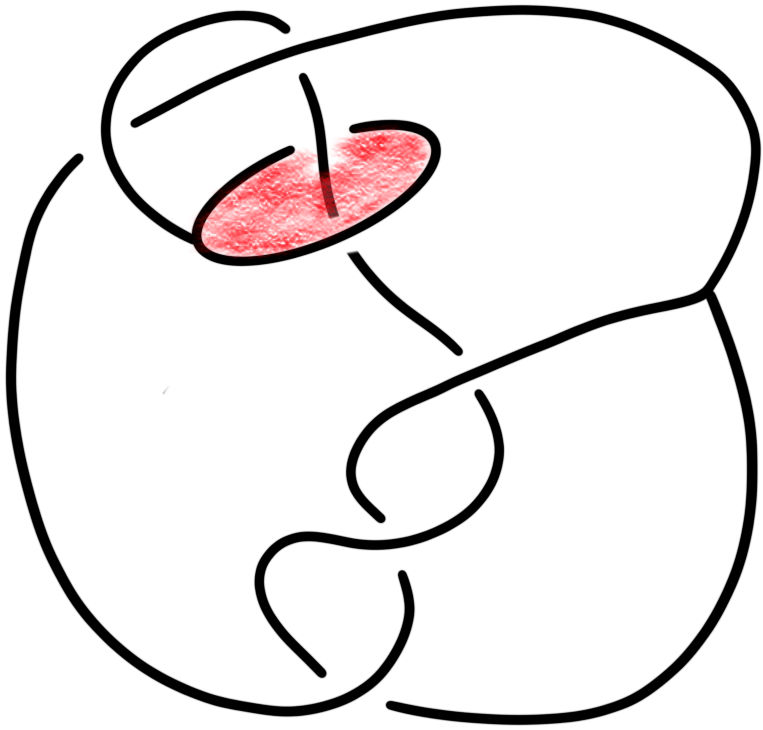}
			\put(28,64){$A'$}
		\end{overpic}
		\caption{$\sevenfortyone$ and annulus $A'$.}
		\label{fig:annulus_fortyone}
	\end{subfigure} 
	\begin{subfigure}{.32\linewidth}
		\centering
		\begin{overpic}[scale=.15,percent]{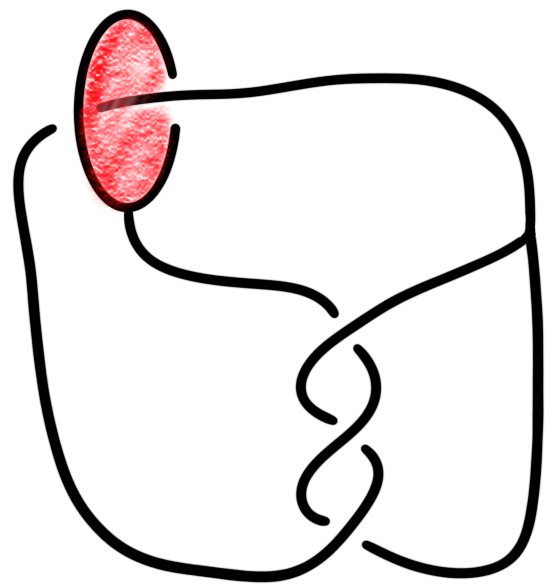}
			\put(18,87){\tiny $A_0$}
		\end{overpic}
		\caption{$\fiveone$ and annulus $A_0$.}
		\label{fig:fiveone}
	\end{subfigure}
	\caption{}
\end{figure}
\begin{theorem}
	$\sevenforty$ and $\sevenfortyone$ are inequivalent.
\end{theorem}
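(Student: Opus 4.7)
The plan is to distinguish $V = \sevenforty$ from $V' = \sevenfortyone$ by analyzing essential annuli in their exteriors, namely the annuli $A \subset \Compl{V}$ and $A' \subset \Compl{V'}$ indicated in Figures \ref{fig:annulus_forty} and \ref{fig:annulus_fortyone}. The presence of the reference annulus $A_0$ in the exterior of $\fiveone$ shown in Figure \ref{fig:fiveone} suggests that both handlebody-knots admit a decomposition whose ``$\fiveone$-piece'' provides the distinguishing structure.

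First I would verify that $A$ and $A'$ are essential, meaning incompressible and boundary-incompressible, in the exteriors $\Compl{V}$ and $\Compl{V'}$ respectively. Concretely this amounts to checking that the core curves of $A$ and $A'$ are non-trivial in $\pi_1(\Compl{V})$ and $\pi_1(\Compl{V'})$ and that the two boundary components of each annulus are not co-bounded by a disk in the exterior; the irreducibility of $V$ and $V'$ established in Section \ref{sec:irreducibility} rules out the degenerate possibilities.

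Next I would exploit an annulus uniqueness result, analogous in spirit to the $P_3$-sphere uniqueness used in the proof for $(\sevenfortythree,\sevenfortyfour)$: inside the characteristic (JSJ) decomposition of the irreducible exterior, an essential annulus with prescribed boundary behavior is unique up to ambient isotopy of $\pair$. Cutting $\pair$ (respectively $\pairprime$) along $A$ (respectively $A'$) then yields a canonical pair of pieces. One piece in each case is identifiable, after capping off the annular boundary, with the pair $(\sphere,\fiveone)$ equipped with the annulus $A_0$; the other piece is a simple model pair whose structure is dictated by the diagram. The isotopy classes of the two boundary circles of the annulus, viewed as slopes on $\partial V$ and on the torus boundary arising from the $\fiveone$-piece, form a well-defined invariant of the handlebody-knot.

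Finally I would read off these slopes from the diagrams of $\sevenforty$ and $\sevenfortyone$ and check that the pairs of slopes obtained are inequivalent under the mapping class group of the relevant surface, which forces $V \not\simeq V'$ even up to mirror image. The main obstacle will be the uniqueness step: without an isotopy-uniqueness statement for essential annuli of the type $A$, $A'$ in these exteriors, the slope data is not a genuine invariant. I expect to need a targeted argument (either via JSJ or a direct innermost-disk argument modeled on \cite{BelPaoWan:24}) to upgrade the existence of $A$, $A'$ to their uniqueness, after which the slope comparison becomes a bookkeeping computation directly from the diagrams.
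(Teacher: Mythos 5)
Your overall strategy---locate the essential annuli $A\subset\Compl{V}$, $A'\subset\Compl{V'}$, prove they are unique via the JSJ/characteristic decomposition and the uniqueness of the essential annulus $A_0$ in $\Compl{V_0}$ for $V_0=\fiveone$, and then derive a contradiction from a hypothetical equivalence carrying $A$ to $A'$---is the same as the paper's, and you correctly flag the uniqueness of $A$ and $A'$ as the step needing real work (the paper handles it exactly as you anticipate: a second essential annulus could be isotoped off $A$, pushed into $\Compl{V_0}$, and shown to be essential there, contradicting that $A_0$ is the unique essential annulus of $\Compl{V_0}$ and that $\partial A_0$ meets the frontier of $\rnbhd{A}$).

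The gap is in your final step. ``Read off the slopes of $\partial A$ and $\partial A'$ and check they are inequivalent under the mapping class group of the relevant surface'' is not a well-defined comparison: the slopes live on $\partial V$ and on $\partial V'$, two a priori unrelated genus two surfaces, and any identification between them is exactly the homeomorphism whose existence you are trying to rule out. If instead you compare slopes on $\partial V_0$ after attaching $\rnbhd{A}$ and $\rnbhd{A'}$, the boundary data is essentially the same in both cases (both constructions produce $\fiveone$ with the annulus attached along the same unoriented curves), so raw slope bookkeeping does not separate $\sevenforty$ from $\sevenfortyone$---consistent with the fact that all computed invariants agree for this pair. What actually closes the argument is a rigidity input you do not invoke: by \cite[Theorem $1.2$]{Wan:22} every self-homeomorphism of $(\sphere,\fiveone)$ is isotopic to the identity. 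A homeomorphism $f:\pair\rightarrow\pairprime$ with $f(A)=A'$ induces a self-homeomorphism of $(\sphere,V_0)$ which must reverse the orientation of $\partial A_0$, and this contradicts that triviality. Without citing (or reproving) the triviality of the mapping class group of the pair $(\sphere,\fiveone)$, your ``bookkeeping computation'' has nothing to push against, and the proof does not close.
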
 
\begin{proof}
	Denote by $V,V'$ the handlebody-knots $\sevenforty,\sevenfortyone$, respectively, and observe that 
	$\Compl V$ (resp.\ $\Compl{V'}$) admits an essential annulus $A$ (resp.\ $A'$) as shown in Fig.\ \ref{fig:annulus_forty} (resp.\   \ref{fig:annulus_fortyone}). 
	Let $\rnbhd{A}$ (resp.\ $\rnbhd{A'}$) be a regular neighborhood of $A$ in $\Compl V$ (resp.\ $A'$ in $\Compl{V'}$).
	Then the union $V\cup \rnbhd{A}$ (resp.\ $V'\cup \rnbhd{A'}$) in $\sphere$ is the handlebody-knot $5_1$, denoted by $V_0$; see Fig.\ \ref{fig:fiveone}. By \cite[Theorem $1.4$]{Wan:24}, 
	the exterior $\Compl{V_0}$ admits a unique essential annulus $A_0$.
	%In addition, the separating disks bounded by the boundary components of $A,A'$ induce a spine of $5_1$; 
	%see Fig.\ \ref{fig:fiveone}. 
	
	\subsection*{Claim: $A$ (resp.\ $A'$) is the unique annulus in $\Compl V$ (resp.\ $\Compl {V'}$).}
%	Denote by $l_1,l_2$ the components of $\partial A$.
	Suppose otherwise and $\Compl V$ admits an essential annulus $A^\ast$ non-isotopic to $A$. By the JSJ-decomposition of $\Compl V$, if $A$ is not a characteristic annulus, we can take $A^\ast$ to be a characteristic annulus. Thus, whether $A$ is characteristic or not, $A^\ast$ can always be isotoped away from $A$ and hence from a regular neighborhood $\rnbhd{A}$ of $A$. In particular, $A^*$ can be regarded as an annulus in the exterior $\Compl{V_0}$, and 
	the incompressibility of $A^*\subset\Compl V$ implies that it is incompressible also in $\Compl{V_0}$.
	
	%Let $\alpha_1,\alpha_2$ be the components of $\partial A^*$, and 
	Let 
	$A_+,A_-$ be the components in the frontier of $\rnbhd{A}\subset\Compl V$.  
	Suppose $A^*$ is $\partial$-compressible in $\Compl{V_0}$, and $D$ is a 
	$\partial$-compressing disk. Then, since $\fiveone$ is irreducible, $\Compl{V_0}$ is $\partial$-irreducible. Therefore compressing $A^\ast$ along $D$ yields an inessential disk in $\Compl{V_0}$. This, along with the incompressibility of $A^*$, shows that $A^*$ is $\partial$-parallel 
	to an annulus $A^{**}$ in $\partial V_0$. 
	Since $A^*$ is essential in $\Compl V$,
	$A^{**}$ meets $A_+\cup A_-$. Given $\partial A^\ast$ does not meet $A_+\cup A_-$, $A^{**}$ either contains both $A_+, A_-$ or only one of them. If $A^{**}$ contains $A_+\cup A_-$, the cores of $A_+,A_-$ are parallel in $\partial V_0$, a contradiction. If $A^{**}$ contains $A_+$ or $A_-$, then $A^{**}$ and $A_+$ or $A_-$ has the same core, and hence $A^{*}$ is isotopic to $A_+$ or $A_-$ in $\Compl{V_0}$, and isotopic to $A$ in $\Compl{V}$, a contradiction. 
	
	As a result, $A^*\subset \Compl {V_0}$ is $\partial$-incompressible and hence essential, yet this contradicts that $A_0$ is the unique essential annulus in $\Compl{V_0}$ and $\partial A_0$ meets $A_+\cup A_-$.
	The same argument applies to $V'$.  
	
	Suppose $\sevenforty,\sevenfortyone$ are equivalent, up to mirror image, and $f:\pair\rightarrow \pairprime$ is a homeomorphism. Then by the uniqueness of $A,A'$, it may be assumed that $f(A)=A'$ and hence $f$ induces a self-homeomorphism 
	of $\fiveone$ that reverses the orientation of $\partial A_0$, contradicting that every self-homeomorphism of $\fiveone$ is isotopic to the identity by \cite[Theorem $1.2$]{Wan:22}.   
\end{proof}

\nada{
	\begin{figure}[h]
		\begin{subfigure}{.33\linewidth}
			\centering
			\begin{overpic}[scale=.12,percent]{mobius_fiftynine}
				\put(72,62){$M$}
			\end{overpic}
			\caption{$\sevenfiftynine$: annulus $M$.}
			\label{fig:mobius_fiftynine}
		\end{subfigure} 
		\begin{subfigure}{.33\linewidth}
			\centering
			\begin{overpic}[scale=.12,percent]{mobius_sixty}
				\put(68,47){$M'$}
			\end{overpic}
			\caption{$\sevensixty$: annulus $M'$.}
			\label{fig:mobius_sixty}
		\end{subfigure} 
		\caption{}
	\end{figure}
	\begin{theorem}
		$\sevenfiftynine$ and $\sevensixty$ are inequivalent.
	\end{theorem}
	\begin{proof}
		\draftMP{Is this the hardpair that is going into \cite{BePaPaWa:25}?}
		Let $\sevenfiftynine=\pair,\sevensixty=\pairprime$, and observe that 
		there is an essential M\"obius band $M\subset \Compl V$ (resp.\ $M'\subset \Compl{V'}$) such that the union $V\cup \rnbhd{M}$ (resp.\ $V'\cup \rnbhd{M'}$) in $\sphere$ is $5_1$; see Figs.\ \ref{fig:mobius_fiftynine} and \ref{fig:mobius_sixty}).

		The same argument of the claim in the previous proof implies that  $M\subset \Compl V$ (resp.\ $M'\subset \Compl{V'}$) is the unique essential M\"obius band.
		%Then there exists an essential annulus $A^*$ in $\Compl V$ (resp.\ $\Compl{V'}$) disjoint from 
		%$M$ (resp.\ $M'$).  , we see 
		%$A^*$ is essential in the exterior of %$5_1$, contradicting $B$ is the unique essential annulus in the exterior; see Fig.\ \ref{fig:fiveone}. 
		%Therefore, $M\subset \Compl V$ (resp.\ $M'\subset \Compl{V'}$) is the unique M\"obius band. 
		Suppose there exists a homeomorphism 
		$f:\pair\rightarrow\pairprime$. 
		Then it may be assumed that $f(M)=M'$, and hence $f$ induces a self-homeomorphism $g$ of $5_1$. Since the slope of the frontier of $\rnbhd{M}$ (resp.\ $\rnbhd{M'}$) with respect to $\rnbhd{M}$ (resp.\ $\rnbhd{M'}$) is $\frac{1}{2}$ (resp.\ $-\frac{1}{2}$), $f$ and hence $g$ are orientation-reversing, contradicting that every self-homeomorphism of $5_1$ is isotopic to the identity \cite{Wan:22}. 
	\end{proof}
}
\begin{figure}[t]
	\begin{subfigure}{.48\linewidth}
		\centering
		\begin{overpic}[scale=.14,percent]{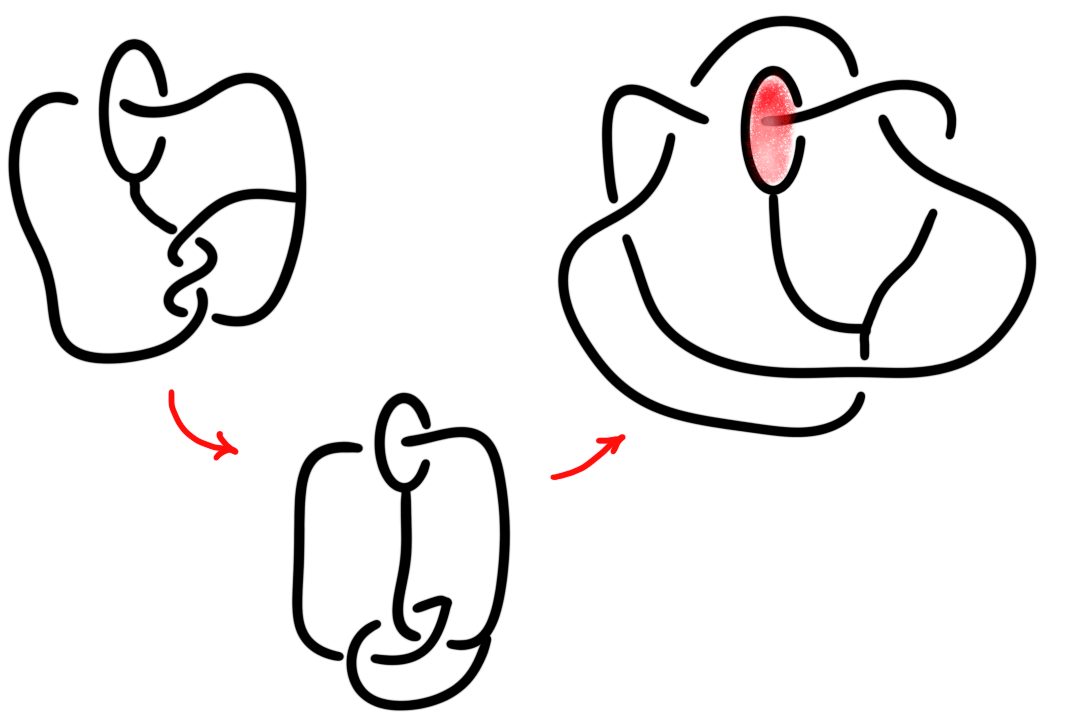}
			\put(70,52){\footnotesize $A$}
		\end{overpic}
		\caption{Deforming $5_1$; annulus $A$.}
		\label{fig:deformation_fiveone}
	\end{subfigure} 
	\begin{subfigure}{.48\linewidth}
		\centering
		\begin{overpic}[scale=.14,percent]{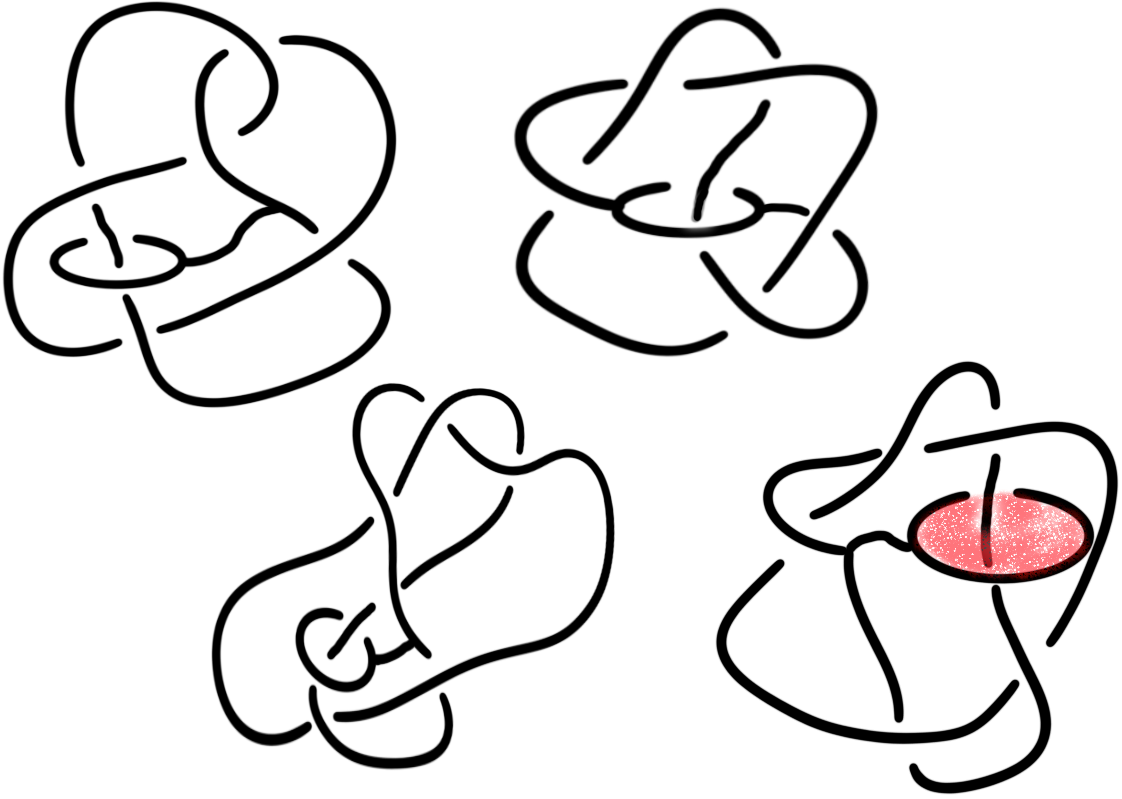}
			\put(90,21.5){\footnotesize $A'$}
		\end{overpic}
		\caption{Deforming $\sevenfiftytwo$; annulus $A'$.}
		\label{fig:deformation_sevenfifytwo}
	\end{subfigure} 
	%%%
	\begin{subfigure}{.48\linewidth}
		\centering
		\begin{overpic}[scale=.13,percent]{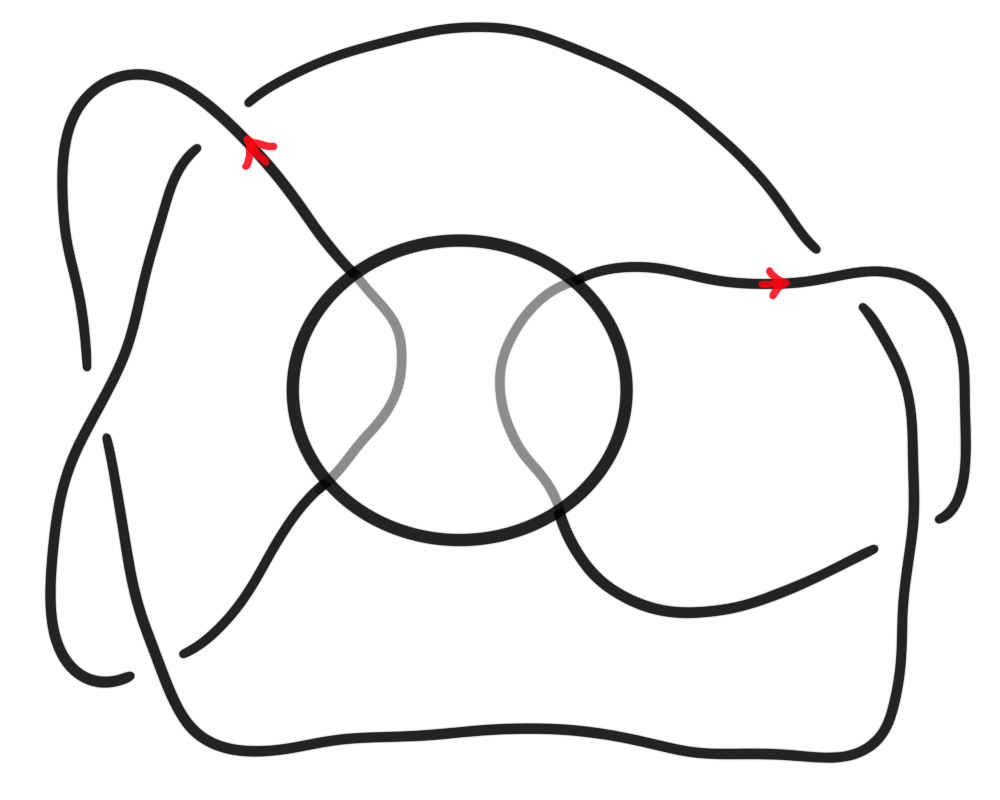}
			\put(70,53){$a$}
			\put(25,56){$b$}
			\put(42,29){$B_0$}
			\put(55,55){\tiny NE}
			\put(33.5,56){\tiny NW}
			\put(58.5,26){\tiny SE}
			\put(22,30){\tiny SW}
		\end{overpic}
		\caption{$5_1$: induced tangle.}
		\label{fig:tangle_fiveone}
	\end{subfigure} 
	\begin{subfigure}{.48\linewidth}
		\centering
		\begin{overpic}[scale=.13,percent]{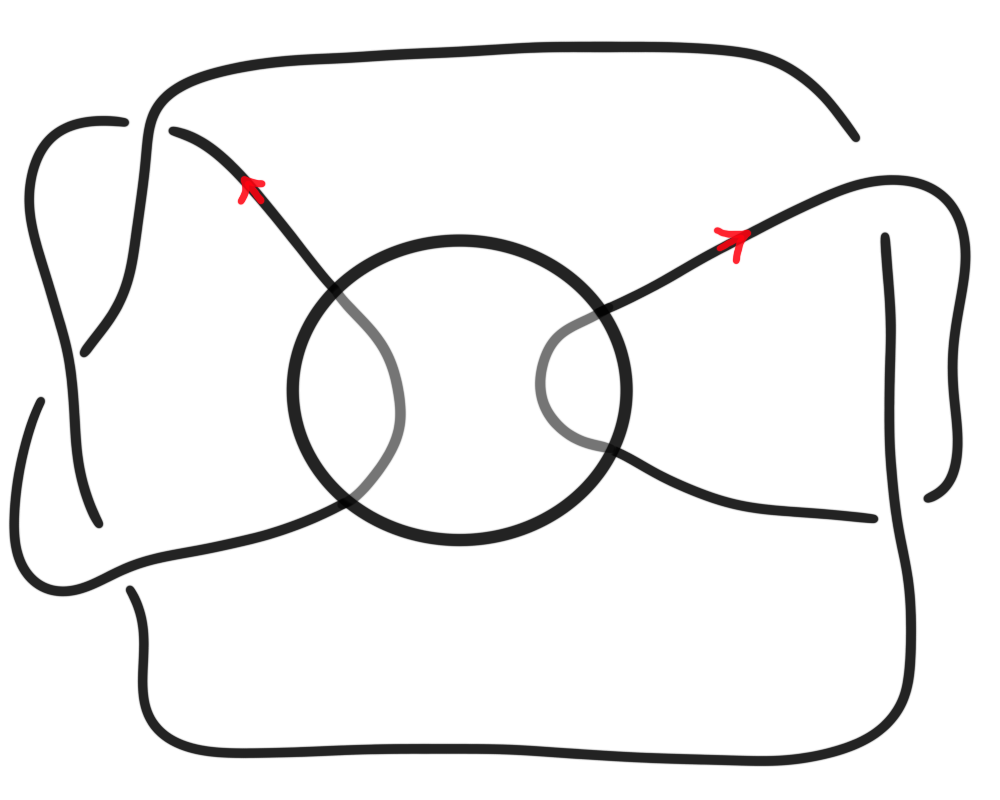}
			\put(75.5,51.5){$a'$}
			\put(17.5,56.5){$b'$}
			\put(42,29){$B_0'$}
			\put(56.5,53){\tiny NE'}
			\put(31,56){\tiny NW'}
			\put(59,27.5){\tiny SE'}
			\put(30,22.5){\tiny SW'}
		\end{overpic}
		\caption{$\sevenfiftytwo$: induced tangle.}
		\label{fig:tangle_sevenfiftytwo}
	\end{subfigure} 
	\caption{}
\end{figure}

\begin{theorem}
	$5_1$ and $\sevenfiftytwo$ are inequivalent.
\end{theorem}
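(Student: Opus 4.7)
The plan is to follow the template of the two preceding theorems: locate a canonical essential surface in each exterior, establish its uniqueness up to isotopy, and use a hypothetical equivalence to produce a contradiction with a simpler auxiliary invariant. Put $V=\fiveone$ and $V'=\sevenfiftytwo$.

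First, I would exhibit essential annuli $A\subset\Compl V$ and $A'\subset\Compl{V'}$ as in Figures~\ref{fig:deformation_fiveone} and~\ref{fig:deformation_sevenfifytwo}. The annulus $A$ is the unique essential annulus in $\Compl V$ by \cite[Theorem $1.4$]{Wan:24}. For $\Compl{V'}$, I would copy the JSJ-based argument from the $(\sevenforty,\sevenfortyone)$ proof almost verbatim: any other essential annulus in $\Compl{V'}$ can be made disjoint from $A'$ by the JSJ-decomposition; if $\partial$-compressible it compresses to an inessential disk by $\partial$-irreducibility of $\Compl{V'}$ and hence is $\partial$-parallel, and a short case analysis on how this parallelism meets the frontier components of $\rnbhd{A'}$, together with the position of $\partial A'$ on $\partial V'$, forces it to be isotopic to $A'$.

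Second, cutting $\Compl V$ (resp.\ $\Compl{V'}$) along the regular neighborhood of $A$ (resp.\ $A'$) isolates a $3$-ball $B_0$ (resp.\ $B_0'$) carrying a two-string tangle with strings $\{a,b\}$ (resp.\ $\{a',b'\}$) whose endpoints sit on $\partial B_0$ (resp.\ $\partial B_0'$) at the four marked points NE, NW, SE, SW (resp.\ NE', NW', SE', SW') pictured in Figures~\ref{fig:tangle_fiveone} and~\ref{fig:tangle_sevenfiftytwo}. Uniqueness of $A$ and $A'$, together with the rigidity of $\pair$-self-homeomorphisms from \cite[Theorem $1.2$]{Wan:22} for $V=\fiveone$, makes this marked two-string tangle a well-defined invariant of the handlebody-knot up to mirror image and up to the finite permutation group of the boundary labels induced by the symmetries of the annulus-and-ball configuration.

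Third, any equivalence $f\colon\pair\to\pairprime$ (up to mirror image) could, by the uniqueness statements, be isotoped so that $f(A)=A'$, and would therefore restrict to an equivalence of marked tangles $(B_0,\{a,b\})\to(B_0',\{a',b'\})$ compatible with a permutation of the four boundary points. To rule this out I would compute the numerator and denominator closures of each tangle (closing NE--NW with SE--SW, and NE--SE with NW--SW), read the resulting closure knots off the diagrams in Figures~\ref{fig:tangle_fiveone} and~\ref{fig:tangle_sevenfiftytwo}, and check with an elementary invariant such as the determinant or Alexander polynomial that no label-permutation matches the closure-knot multisets of $T$ and $T'$.

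The principal obstacle is the second uniqueness claim: unlike the $\fiveone$ case, which is handed to us by \cite{Wan:24}, the uniqueness of $A'$ in $\Compl{\sevenfiftytwo}$ has to be argued by hand, via a careful JSJ-style analysis using the specific geometry visible in the diagram. Once that is settled, the tangle computation at the end is a finite, essentially combinatorial check.
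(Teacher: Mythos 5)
Your overall architecture (unique essential annulus, cut to an induced marked two--string tangle, distinguish the tangles) is the paper's, but two of your steps do not go through as written. First, the ``principal obstacle'' you identify is not actually an obstacle: the uniqueness of $A'$ in $\Compl{V'}$ does not require a hand-made JSJ argument. The paper observes that a component of $\partial A'$ bounds a \emph{non-separating disk} $D'$ in $V'$, and under that hypothesis \cite[Theorem $1.4$]{Wan:24} gives uniqueness directly, exactly as for $5_1$. Your plan to copy the $(\sevenforty,\sevenfortyone)$ argument ``almost verbatim'' is also doubtful on its own terms, since that argument leaned on identifying $V\cup\rnbhd{A}$ with $\fiveone$ and invoking the known classification of essential annuli in $\Compl{\fiveone}$; you would need the analogous identification for $\sevenfiftytwo$ before the verbatim copy makes sense.

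The decisive gap is in your last step. The determinant and the Alexander polynomial are invariant under mirror image, and the statement you must prove is inequivalence \emph{up to mirror image}, so the putative homeomorphism $f$ may be orientation-reversing. In fact the relevant closures are genuinely mirror-related: filling with the $\infty$-rational tangle turns $(B_0,\{a,b\})$ into a connected sum of a \emph{right-handed} trefoil with a Hopf link and $(B_0',\{a',b'\})$ into a connected sum of a \emph{left-handed} trefoil with a Hopf link, so any mirror-blind invariant of the closures will agree and your ``finite combinatorial check'' returns no contradiction. The paper's way out is a two-invariant squeeze: the trefoil chirality forces the induced homeomorphism $g$ to be orientation-reversing, while the linking number (equal to $+1$ for both oriented links, and sign-reversing under orientation reversal) forbids exactly that. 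You would need a chirality-sensitive invariant paired with an orientation-sensitive one, not the determinant or Alexander polynomial. Separately, your claim that an equivalence of marked tangles ``compatible with a permutation of the four boundary points'' matches numerator closures to numerator closures is too quick: $g$ carries the chosen rational tangle $t$ in the complementary ball to some $\tfrac{p}{q}$-rational tangle, and one must rule out $p\neq 0$ (the paper does this by primeness of Montesinos links) before comparing closures at all.
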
 
\begin{proof}
	Denote by $V,V'$ the handlebody-knots $5_1, \sevenfiftytwo$, respectively, and observe that their exteriors $\Compl V, \Compl {V'}$ admit essential
	annulus $A, A'$ in Figs.\ \ref{fig:deformation_fiveone}, 
	\ref{fig:deformation_sevenfifytwo}), respectively. Since $\partial A, \partial A'$ both have exactly one component bounding a non-separating disk in $V,V'$, respectively, $A\subset\Compl V,  A'\subset \Compl{V'}$ are the unique essential annulus by \cite[Theorem $1.4$]{Wan:24}, up to isotopy. Denote by 
	$D\subset V$ (resp.\ $D'\subset V'$) the disk bounded by the component of $\partial A$ (resp.\ $\partial A'$), respectively.   
	
	Now, Suppose $5_1$ and $\sevenfiftytwo$ are equivalent, up to mirror image, and $f:\pair\rightarrow \pairprime$ 
	is a homeomorphism. Then it may be assumed that $f(A)=A'$, and hence $f$ induces a self-homeomorphism $g$ of $\sphere$ sending $V_A:=V\cup \rnbhd{A}$ to $V_A':=V'\cup\rnbhd{A'}$.
	Let $A_a,A_b$ (resp.\ $A_a',A_b'$) be the two annuli in the frontier of $\rnbhd{A}\subset \Compl V$ (resp.\ $\rnbhd{A'}\subset \Compl{V'}$).
	Then the cores of $A_a,A_b$ (resp.\ $A_a',A_b'$) bound two disks $D_a,D_b$ in $V_A$ (resp.\ $D_a',D_b'$ in $V_A'$).
	
	Note that cutting $V_A$ (resp.\ $V_A'$) along one of $D_a,D_b$, say $D_a$ (resp.\ one of $D_a',D_b'$, say $D_a'$), gives a regular neighborhood of the right-hand trefoil, while cutting $V_A$ along $D_b$ (resp.\ $V_A'$ along $D_b'$) gives a regular neighborhood of a trivial knot.
	This implies $g$ is orientation-reversing.
	
	Denote by $a,b\subset V_A$ and by $a',b'\subset V_A'$ the arcs dual to $D_a,D_b$ and $D_a',D_b'$, respectively. Orient the arcs $a,b$ (resp.\ $a',b'$) so they point away from $D$ (resp.\ $D'$); see Fig.\ \ref{fig:tangle_fiveone} (resp.\ \ref{fig:tangle_sevenfiftytwo}). 
	In particular, the union $a\cup b$ (resp.\ $a'\cup b'$) can be regarded as a $2$-string tangle in the exterior $B$ of a regular neighborhood $B_0$ of $A\cup D$ 
	(resp.\ the exterior $B'$ of a regular neighborhood $B_0'$ of $A'\cup D'$). 
	It may be assumed that $g(B_0)=B_0'$. Also, by our choice of orientation and $D_a,D_b,D_a',D_b'$, we have $g(a)=a',g(b)=b'$ with the orientation of $a,a',b,b'$ respected.
	%; see Figs.\ \ref{fig:tangle_fiveone} and \ref{fig:tangle_sevenfiftytwo}.
	
	Let NW,NE,SW,SE (resp.\ NW',NE',SW',SE') be the four boundary points of $a\cup b$ (resp.\ $a'\cup b'$) as shown in Fig.\ \ref{fig:tangle_fiveone} (resp.\ \ref{fig:tangle_sevenfiftytwo}), and observe that there is a $2$-sphere $S.S'$ 
	meeting $B,B'$ at a disk $E,E'$ and $b,b'$ at two points, respectively.
	Let $m:=\partial E, m':=\partial E'$ be the meridian of $B,B'$ and fix some longitude $l,l'$ for $B,B'$, respectively. 

	Since $g(a)=a',g(b)=b'$ with their orientation preserved, $g$ restricts to a homeomorphism from $B_0$ to $B_0'$ carrying XX to XX', where
	XX$=$NW,NE,SW or SE. Consider the rational tangle $t\subset B_0$ with a slope $\infty$ with respect to $m,l$; see Fig.\ \ref{fig:tangle_fiveone}. 
	Then $g$ being a homeomorphism implies $g(t)=t'\subset B'$ is rational. Suppose $t'$ has a slope of $\frac{p}{q}$ with respect to $m',l'$. Since $L=t\cup a\cup b$ is a link, $L':=g(L)=t'\cup a'\cup b'$ is also a link, so the two points NE' and SE' are joined by an arc in $t'$. This implies $q$ is even.  
	
	%%%%%%%%%%%%%%%
	If $q\neq 0$, then $L'$ is a Montesinos link with three tangles $T_1,T_2,T_3$ with a slope of $-\frac{1}{3}$, $\frac{p}{q}$, $\frac{1}{2}$, respectively; see Fig.\ \ref{fig:tangle_sevenfiftytwo}. In particular, the branched cover along $L'$
	is Seifer fibered over a $2$-sphere with $3$ exceptional fibers, so $L'$ is prime, contradicting that $g(L)=L'$ and $L$ is a composite link.  
	As a result, we have $q=0$, and hence $(B_0',t_0')$ has a slope of $\infty$ with respect to $m',l'$.  
	
	Orient $L,L'$ using the orientation of $a\cup b,a'\cup b'$, respectively. Then
	$g$ preserves the orientation of $L,L'$, yet the linking number of both $L,L'$ is $1$, contradicting $g$ reverses the orientation of $\sphere$.  
\end{proof}

\section*{Acknowledgements}
The first author
is member of the GNAMPA-INdAM.
The fourth author gratefully acknowledges the support from NSTC, Taiwan (grant no. 112-2115-M-110 -001 -MY3).
%The first author benefits from the support
%of the GNA\-MPA (Gruppo Nazionale per l'Analisi Matematica, la Probabilit\`a
%e le loro Applicazioni) of INdAM (Istituto Nazionale di Alta
%Matematica). The second author
%benefits from the support of the Swiss National Science Foundation Professorship grant PP00P2\_179110/1. 
%%%%%%%%%%%%%%%%%%%%%%%%%%%%%%%%%%%%%%%%%%%%%%%%%%%%%%%%%%%%%%%%%%%%%%%%%%%%%

%\appendix
%\section{Output of the code}\label{sec:code}
%\input{code}

\end{document}